\numberwithin{equation}{section}
\theoremstyle{plain}
\newtheorem{theorem}{Theorem}[section]
\newtheorem{lemma}[theorem]{Lemma}
\newtheorem{proposition}[theorem]{Proposition}
\newtheorem{corollary}[theorem]{Corollary}
\newtheorem{claim}[theorem]{Claim}
\newtheorem{observation}[theorem]{Observation}
\newtheorem{conjecture}[theorem]{Conjecture}
\newtheorem{definition}[theorem]{Definition}
\renewenvironment{proof}[1][\proofname]
{\par\pushQED{\qed}
	\normalfont\topsep6\p@\@plus6\p@\relax\trivlist
	\item[\hskip\labelsep\bfseries#1\@addpunct{.}]
	\ignorespaces}
{\popQED\endtrivlist\@endpefalse}
\newcommand{\Pro}{\mathcal{P}}
\newcommand{\smkl}{S^-_{k,\ell}}
\newcommand{\smtwol}{S^-_{2,\ell}}
\DeclarePairedDelimiter\card{\lvert}{\rvert}
\title{Unavoidable subgraphs in digraphs with large out-degrees}
\author{
\setlength{\tabcolsep}{20pt}

\begin{tabular}{ccc}
Tom\'{a}\v{s} Hons\thanks{Computer Science Institute (I\'UUK, MFF), Charles University, Prague, Czech Republic.} & Tereza Klimo\v{s}ov\'{a}\thanks{Department of Applied Mathematics (KAM, MFF), Charles University, Prague, Czech Republic. \\Emails: \{\href{mailto:honst@iuuk.mff.cuni.cz}{honst}, \href{mailto:miksanik@iuuk.mff.cuni.cz}{miksanik}, \href{mailto:josef.tkadlec@iuuk.mff.cuni.cz}{josef.tkadlec}\}@iuuk.mff.cuni.cz; \{\href{mailto:gaurav@kam.mff.cuni.cz}{gaurav}, \href{mailto:tereza@kam.mff.cuni.cz}{tereza}, \href{mailto:tyomkyn@kam.mff.cuni.cz}{tyomkyn}\}@kam.mff.cuni.cz. 
TH and MT have been supported by GA\v{C}R
grant 25-17377S and ERC Synergy Grant DYNASNET 810115.
TK has been supported by GAČR grant 25-16847S and Charles Univ.\ project UNCE 24/SCI/008.
GK has been supported by GA\v{C}R
grant 25-17377S and Charles Univ.\ project UNCE 24/SCI/008.
DM has been supported by the ERC-CZ project LL2328.
JT has been supported by GA\v{C}R
grant 25-17377S and Charles Univ.\ projects UNCE 24/SCI/008 and PRIMUS 24/SCI/012.} & Gaurav Kucheriya\footnotemark[2] \\ David Mik\v{s}an\'ik\footnotemark[1] & Josef Tkadlec\footnotemark[1] & Mykhaylo Tyomkyn\footnotemark[2]
\end{tabular}}
\date{}
\begin{document}
\maketitle
\begin{abstract}
We ask the question, which oriented trees $T$ must be contained as subgraphs in every finite directed graph of sufficiently large minimum out-degree. We formulate the following simple condition: all vertices in $T$ of in-degree at least $2$ must be on the same `level' in the natural height function of $T$. We prove this condition to be necessary and conjecture it to be sufficient. In support of our conjecture, we prove it for a fairly general class of trees. 

An essential tool in the latter proof, and a question interesting in its own right, is finding large subdivided in-stars in a directed graph of large minimum out-degree. We conjecture that any digraph and oriented graph of minimum out-degree at least $k\ell$ and $k\ell/2$, respectively, contains the $(k-1)$-subdivision of the in-star with $\ell$ leaves as a subgraph; this would be tight and 
generalizes a conjecture of Thomass\'e. We prove this for digraphs and $k=2$ up to a factor of less than $2$.
\end{abstract}

\newpage

\section{Introduction}\label{sec:intro}

One of the main focuses in the study of finite\footnote{All directed graphs considered in this paper will be finite. For better readability we will keep it implicit.} directed graphs has been the investigation of properties of digraphs and oriented graphs of large minimum out-degree. For instance, the famous Caccetta-H\"aggkvist conjecture~\cite{caccetta1978minimal} states that every digraph $G$ of order $n$ with minimum out-degree $\delta^+(G)\geq d$ has a directed cycle of length at most $\lceil n/d \rceil$. A conjecture of Thomass\'e  (see~\cite{bang2008digraphs, sullivan2006summary}) claims that any oriented graph $G$ with $\delta^+(G)\geq d$ contains a directed path of length $2d$, and recently Cheng and Keevash~\cite{cheng2024length}, proved a lower bound of $3d/2$. The Bermond-Thomassen conjecture~\cite{bermond1981cycles} states that every digraph $G$ with $\delta^+(G)\geq 2d-1$ contains $d$ disjoint directed cycles. Alon~\cite{alon1996disjoint} and, more recently, Buci\'c~\cite{bucic2018improved} proved it with $2d-1$ replaced by $64d$ and $18d$, respectively. 

As quantitative bounds in many of these problems are hard to obtain or sometimes even to guess, it is natural to ask for a property of digraphs if it holds in all digraphs of \emph{sufficiently large} minimum out-degree. For example, Stiebitz~\cite{stiebitz1996decomposing} and, independently, Alon~\cite{alon2006splitting} asked whether for every $a, b \geq 1$ there exists $F(a, b)$ such that $\delta^+(G)\geq F(a,b)$ implies that $V(G)$ can be partitioned
into two non-empty parts $A$ and $B$ with $\delta^+(G[A])\geq a$ and $\delta^+(G[B])\geq b$. In a recent breakthrough, Christoph, Petrova and Steiner~\cite{christoph2023note} reduced this question to that of existence of $F(2,2)$.  

Mader~\cite{mader1995exixtence} conjectured the existence of a function $f$ so that $\delta^+(G)\geq f(k)$ implies that $G$ contains a subdivision of the transitive tournament of order $k$, and proved it for $k \leq 4$~\cite{Mader1996}. This sparked an interest in finding subdivisions of fixed digraphs in digraphs of large out-degree. Aboulker, Cohen, Havet, Lochet, Moura and Thomass\'e~\cite{aboulker1610subdivisions} defined a digraph $H$ to be \emph {$\delta^+$-maderian} if, for some value $d$, every $G$ with $\delta^+(G)\geq d$ contains a subdivision of $H$. In this terminology, Mader's conjecture states that every acyclic digraph is $\delta^+$-maderian. In support of this, the authors of~\cite{aboulker1610subdivisions} proved among other results that every in-arborescence (i.e. a tree with all edges oriented towards a designated
root vertex) is $\delta^+$-maderian. They also conjectured that every orientation of a cycle is $\delta^+$-maderian, and this was recently confirmed by Gishboliner, Steiner and Szab\'o~\cite{gishboliner2022oriented}.

In this paper we are asking which digraphs $H$ must be contained in all digraphs of sufficiently large minimum out-degree \emph{as subgraphs}. To our surprise, we were not able to find any previous systematic study of the topic, despite the question being natural. Note that orientations of each graph with a cycle can be avoided by taking a $2d$-regular connected unoriented graph of large girth, and orienting its edges via an Euler circuit. Hence, we may assume that $H$ is an orientation of a tree (or a forest, which again reduces to trees). 

\begin{definition}
An oriented tree $T$ is $\delta^+$-enforcible if there exists $d=d(T)$ such that every digraph $G$ with $\delta^+(G)\geq d$ contains $T$ as a subgraph.     
\end{definition}

\noindent
A simple greedy embedding certifies that every out-arborescence is $\delta^+$-enforcible. Much less obviously, by the aforementioned result of Aboulker et al.~\cite{aboulker1610subdivisions} on in-arborescences, every subdivision of the in-star is $\delta^+$-enforcible. We remark that it is \emph{not} true that every in-arborescence is $\delta^+$-enforcible, as will follow from our Theorem~\ref{thm:level}. Another known family of $\delta^+$-enforcible trees are the antidirected trees (that is, trees containing no directed path of length $2$). By a theorem of Burr~\cite{Burr_1982}, $\delta^+(G)\geq 4k$ implies that $G$ contains every antidirected $k$-edge tree as a subgraph.  

For an oriented tree $T$, its \emph{height function} $h_T\colon V(T)\rightarrow \mathbb{Z}$ is a function satisfying $h_T(v)=h_T(u)+1$ for every edge $(u,v)\in E(T)$. It is clear that the height function is well-defined and unique up to an additive constant. Since we will only care about the relative values of $h_T$ between the vertices of $T$, we will, slightly abusing the notion, speak of ``the height function $h_T$.''

\begin{definition}\label{def:grounded}
An oriented tree $T$ is \emph{grounded} if $h_T(v)$ is constant for all vertices $v$ of in-degree at least $2$.
\end{definition}

\noindent
Note that all the above examples of $\delta^+$-enforcible trees are grounded. We show that they have to be, and conjecture that this is an `if and only if' relationship.

\begin{theorem}\label{thm:level}
Every $\delta^+$-enforcible tree is grounded.
\end{theorem}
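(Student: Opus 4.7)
The plan is to prove the contrapositive: if $T$ is not grounded, then for every $d$ there exists a digraph $G_d$ with $\delta^+(G_d) \geq d$ not containing $T$ as a subgraph. The construction is a \emph{cyclic leveled digraph} in which every level but one consists of vertices of in-degree exactly $1$. Since any subgraph embedding of $T$ into such a digraph must respect the level structure---and therefore project the height function of $T$ onto $\mathbb{Z}_k$---every in-degree $\geq 2$ vertex of $T$ is forced into the single non-bottleneck level, contradicting $T$ having in-degree $\geq 2$ vertices at two different heights.

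\textbf{Construction.} Let $k := \mathrm{diam}(T)+1$, so that distinct heights of vertices of $T$ remain distinct modulo $k$. Define $G_d$ on vertex set $L_0 \sqcup L_1 \sqcup \cdots \sqcup L_{k-1}$ with $|L_0| := d$ and $|L_i| := d^{i+1}$ for $1 \leq i \leq k-1$, and place edges only between consecutive levels modulo $k$. For each $i$ with $1 \leq i \leq k-1$, partition $L_i$ into $|L_{i-1}|=d^i$ blocks of size $d$ and let each vertex of $L_{i-1}$ be adjacent to exactly one block; this gives every vertex of $L_{i-1}$ out-degree $d$ into $L_i$ and every vertex of $L_i$ in-degree exactly $1$. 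Finally, take the complete bipartite digraph from $L_{k-1}$ to $L_0$, giving every vertex of $L_{k-1}$ out-degree $|L_0| = d$. Then $\delta^+(G_d) \geq d$, and $L_0$ is the only level containing vertices of in-degree greater than $1$.

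\textbf{Non-embedding.} Suppose $\phi \colon V(T) \hookrightarrow V(G_d)$ is a subgraph embedding, and let $\ell$ denote the level function on $V(G_d)$. Since every edge of $G_d$ goes from $L_i$ to $L_{i+1 \pmod k}$, for every $(u,v) \in E(T)$ we have $\ell(\phi(v)) \equiv \ell(\phi(u)) + 1 \pmod k$. As $T$ is a connected tree and its height function is unique up to an additive constant, this forces $\ell \circ \phi \equiv h_T + s \pmod k$ for some $s \in \mathbb{Z}_k$. Now pick any $v \in V(T)$ of in-degree $\geq 2$; its two in-neighbors $a, b$ have distinct images in $L_{(h_T(v)-1+s) \bmod k}$, both with edges to $\phi(v)$, so $\phi(v)$ has in-degree $\geq 2$ in $G_d$ and therefore $\phi(v) \in L_0$. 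This yields $h_T(v) \equiv -s \pmod k$ for every $v$ of in-degree $\geq 2$ in $T$, and by the choice of $k$ all such vertices have the same height---contradicting that $T$ is not grounded.

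The only delicate point is the cyclic consistency of the layer sizes: each of the $k-1$ bottleneck transitions blows the layer size up by a factor of $d$, and the single complete-bipartite transition $L_{k-1} \to L_0$ absorbs the accumulated $d^{k-1}$ blow-up without imposing any further constraint. Beyond that, the argument is routine.
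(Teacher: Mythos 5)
Your proof is correct and uses essentially the same construction as the paper (the cyclic level digraph with a single high in-degree level), differing only in minor details: you take $k=\mathrm{diam}(T)+1$ levels and argue that heights are preserved modulo $k$, whereas the paper takes $2|V(T)|+1$ levels and notes that the embedded tree must miss a level entirely, so the height function lifts to $\mathbb{Z}$. Both routes are valid and the argument goes through.
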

\begin{conjecture}[KAMAK tree conjecture\footnote{Named after the KAMAK 2024 workshop where the present work was initiated.}]\label{conj:KAMAK-tree}
Every grounded tree is $\delta^+$-enforcible. Hence, an oriented tree is $\delta^+$-enforcible if and only if it is grounded.     
\end{conjecture}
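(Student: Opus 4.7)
My approach to Conjecture~\ref{conj:KAMAK-tree} combines the subdivided in-star tool (the paper's own conjecture, proved up to a constant factor for $k=2$) with greedy embedding of out-arborescences.

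First, I would set up the following decomposition. Let $T$ be a grounded tree with grounded level $L$. Write $T^{\uparrow}$ for the vertices of height less than $L$, $V_L$ for the grounded set (height exactly $L$), and $T^{\downarrow}$ for the vertices of height greater than $L$. By the grounded condition, all vertices in $T^{\uparrow} \cup T^{\downarrow}$ have in-degree at most $1$. Consequently $T^{\downarrow}$ together with $V_L$ is a disjoint union of out-arborescences rooted at $V_L$, while in $T^{\uparrow} \cup V_L$ all in-branchings are concentrated at $V_L$: each $v \in V_L$ is the root of an in-arborescence $A_v$ whose non-root vertices lie in $T^{\uparrow}$, although distinct $A_v$'s may share $T^{\uparrow}$-vertices whenever a source of $T$ has out-edges to several $V_L$-vertices.

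The plan proceeds in two stages. Stage A (top embedding): find in $G$ a set $R = \{r_v : v \in V_L\}$ and a disjoint embedding of $\bigcup_v A_v$ with each $A_v$ rooted at $r_v$. For the subcase $|V_L|=1$, this reduces to finding a specified in-arborescence as a subgraph, which contains the subdivided in-star $S^-_{k,\ell}$ as a special case; I would prove this by a recursive argument where, at each step, the subdivided in-star tool locates one more in-branch extending the current partial embedding. For $|V_L|\geq 2$, I would iterate Stage A over $v \in V_L$, finding a new in-arborescence at each step while carefully reserving vertices of $G$, using a slight strengthening of the subdivided in-star result that allows a chosen high-out-reach vertex to serve as the in-star center. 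Stage B (bottom embedding): for each $r_v$, greedily extend downward via the out-degree condition to embed the out-arborescence of $T^{\downarrow}$ rooted at $v$, reusing the standard argument for $\delta^+$-enforcibility of out-arborescences and being careful to avoid collisions with the Stage A image via a vertex-budget large compared to $|V(T)|$.

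The main obstacle is Stage A in full generality. The difficulty is that $T^{\uparrow}$ may contain sources with out-edges to several distinct $V_L$-vertices, so the in-arborescences $A_v$ cannot be embedded independently but must be coupled via shared images in $G$. Overcoming this coupling appears to require either a genuinely new structural tool beyond subdivided in-stars, or an absorption-type reservation argument in which the shared sources are threaded through $G$ only after the partial embeddings at the various $r_v$ have been committed in a flexible way. This is presumably the point at which the authors' argument restricts to a ``fairly general class'' rather than proving the full conjecture, and it is the step that I would expect to require the most effort.
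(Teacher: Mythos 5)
This statement is Conjecture~\ref{conj:KAMAK-tree}; the paper does not prove it, and neither does your proposal. By your own admission in the closing paragraph, Stage~A is left unproved whenever the in-arborescences $A_v$ are coupled through shared sources, and that coupled case is precisely the open content of the conjecture (the uncoupled and single-sink cases essentially reduce to Proposition~\ref{prop:abetal} plus greedy extension, which the paper already covers). A plan whose hardest step is flagged as ``requiring a genuinely new structural tool'' is a research programme, not a proof, so there is a genuine and unclosed gap. (The ``only if'' half of the statement is Theorem~\ref{thm:level}, proved via the level digraph construction; your outline does not touch it, but that half is not where the difficulty lies.)

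Two further concrete problems with the outline, even as a route to the paper's partial result (Theorem~\ref{thm:groundedclass}). First, your Stage~A for $\lvert V_L\rvert\geq 2$ invokes ``a slight strengthening of the subdivided in-star result that allows a chosen high-out-reach vertex to serve as the in-star centre.'' No such strengthening is available or plausible as stated: Proposition~\ref{prop:abetal} guarantees a copy of $\smkl$ \emph{somewhere}, and one cannot in general prescribe its centre, so iterating ``find one more in-arborescence at a reserved root'' does not get off the ground. The paper avoids this by inverting the order of quantification: it treats ``$v$ is the centre of a copy of $S^-_{k,h}$'' as a $\delta^+$-common property, uses the hierarchy of sets $\Gamma(0),\dots,\Gamma(k)$ (Claim~\ref{claim:Gamma(k)_nonempty_implies_win}) to find a copy of $B^+_{k,\ell}$ \emph{all of whose leaves already are} such centres, and only then disentangles the in-stars greedily. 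Second, your case split is miscalibrated: even in the class the paper handles, the $A_v$ are heavily coupled (they all share the root $r$ and the paths of the out-arborescence $T'$), so ``iterate over $v\in V_L$ with vertex reservation'' would fail there too; what makes that class tractable is not the absence of coupling but that the shared structure above $U$ is an out-arborescence, which can be embedded root-first once its prospective leaves are pre-certified as in-star centres. Your diagnosis that general coupling among the $A_v$ is the true obstruction is accurate and matches why the paper stops at Theorem~\ref{thm:groundedclass}, but the proposal neither resolves the conjecture nor, without the $\Gamma(i)$-type argument, recovers the paper's special case.
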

\noindent
In support of Conjecture~\ref{conj:KAMAK-tree}, we prove it for a fairly general class of trees. 
\noindent

\begin{figure}[htbp]
  \centering
  \includegraphics[scale=0.85]{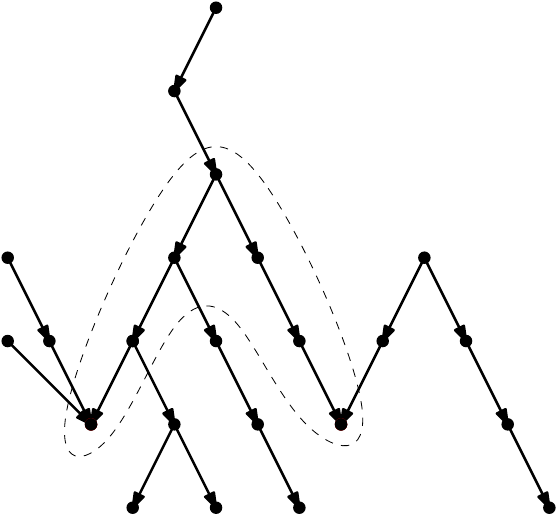}
  \caption{An example of a tree $T$ that is $\delta^+$-enforcible by Theorem~\ref{thm:groundedclass}, highlighting the minimal subtree containing the set $U(T)$.}
\end{figure}

\begin{theorem}\label{thm:groundedclass}
Suppose that $T$ is grounded and that the minimal subtree of $T$ containing the vertex set $U(T)=\{v\in V(T): \deg^-(v)\geq 2\}$ is an out-arborescence. Then $T$ is $\delta^+$-enforcible.  
\end{theorem}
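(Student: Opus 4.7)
The plan is to induct on $|U|$, with the $\delta^+$-enforcibility of subdivided in-stars---noted in the introduction as a consequence of Aboulker et al.'s theorem that every in-arborescence is $\delta^+$-maderian---as the principal tool. First I set up the structure: let $A$ be the minimal subtree of $T$ containing $U$, which by hypothesis is an out-arborescence rooted at some $r$. Groundedness together with minimality forces the leaves of $A$ to be exactly $U$. Deleting $E(A)$ from $T$ leaves a forest of subtrees, each attached at some $v\in V(A)$; since $\deg^-_T(v)=1$ for every $v\in V(A)\setminus(U\cup\{r\})$, each such attached subtree is itself an out-arborescence connected to $v$ via an out-edge (call this a \emph{Type 1} attachment), while at leaves of $A$ (and possibly at $r$) additional out-arborescences may be attached via an in-edge from some designated vertex $x_i$ of the attachment (a \emph{Type 2} attachment).

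I would first reduce to embedding the subtree $T^-\subseteq T$ obtained by removing all Type 1 attachments: the Type 1 subtrees can be glued greedily onto any embedding of $T^-$ using the minimum out-degree of $G$, at the cost of an additive term of size $|V(T)|$ in $d(T)$. For $T^-$ itself, induct on $|U|$. In the base case $|U|=1$, the tree $T^-$ consists of a single hub $u$ with $m$ Type 2 out-arborescences $S_i$ (rooted at $y_i$) attached via edges $(x_i,u)$. Apply the subdivided in-star result to obtain $u'\in V(G)$ together with $m$ vertex-disjoint incoming directed paths, each of length at least $\max_i\mathrm{dist}_{S_i}(y_i,x_i)+1$; use the terminal segment of the $i$th path as the spine $y_i'\to\cdots\to x_i'\to u'$, and greedily extend each spine into a full embedded copy of $S_i$. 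In the inductive step $|U|\geq 2$ the root $r$ lies outside $U$, and letting $c_1,\ldots,c_s$ be its children in $A$, the subtrees $T^-_j$ rooted at $c_j$ lie in the same class with $|U_j|<|U|$. After fixing $r'\in V(G)$ and picking distinct out-neighbors $c'_j$ of $r'$, recursively embed each $T^-_j$ disjointly; a possible Type 2 attachment at $r$ is absorbed into the same step by additionally choosing $r'$ to have enough in-neighbors (which is possible because the average in-degree in $G$ is at least the minimum out-degree).

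The main technical obstacle is the tension between the two directions of the construction: the base case \emph{locates} the in-star center $u'$ from the structure of $G$, with no freedom to prescribe $u'$ in advance, while the inductive step naturally tries to pin the image of $c_j$ to a specified out-neighbor of $r'$. Bridging this gap requires a strengthened inductive hypothesis in which the image of $c_j$ may be chosen from a sufficiently large ``target set'' in $V(G)$ guaranteed to intersect any prescribed out-neighborhood---or equivalently, one flips the order of the induction and first locates all the in-star centers corresponding to the $U$-leaves of $T^-$, only then growing the arborescence $A$ around them. Either resolution calls for careful quantitative bookkeeping of how many vertices must be avoided at each recursive step, but both succeed with a threshold $d(T)$ scaling polynomially in $|V(T)|$ and in the subdivided-in-star threshold.
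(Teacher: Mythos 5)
Your structural setup matches the paper's: normalize $T$ so that the minimal subtree $A$ containing $U$ is an out-arborescence whose leaves are exactly $U$, glue pendant out-directed pieces on greedily at the end, and invoke the Aboulker et al.\ subdivided in-star result to handle the vertices of in-degree at least $2$. But the argument has a genuine gap exactly where you flag ``the main technical obstacle,'' and neither of your proposed resolutions is carried out or workable as stated. The difficulty is real: you must embed the out-arborescence $A$ so that \emph{all} of its leaves are centres of large subdivided in-stars, yet the in-star result gives no control over where those centres sit, while a greedy top-down embedding of $A$ gives no control over whether the leaves it reaches are centres. Your first fix posits a large ``target set'' of good roots that meets every prescribed out-neighbourhood; no such set exists in general (the set of in-star centres need not be hit by every out-neighbourhood at all), and proving that \emph{some} vertex has many out-neighbours among the good vertices is precisely the missing content. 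Your second fix, growing $A$ ``around'' previously located centres, would require a common ancestor with directed paths to all of them, which large minimum out-degree does not provide.

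The paper closes this gap with a specific mechanism absent from your sketch. Set $\Gamma(0)$ to be the vertices that are centres of a suitable $S^-_{k,h}$ and iteratively $\Gamma(i)=\{v:\deg^+_{\Gamma(i-1)}(v)\ge 2\ell^k\}$. If $\Gamma(k)\ne\emptyset$, one roots the arborescence at a vertex of $\Gamma(k)$ and expands greedily, always stepping into the next $\Gamma$-set, so every leaf lands in $\Gamma(0)$; the in-stars are then attached disjointly by a counting argument. The nonemptiness of $\Gamma(k)$ is the heart of the proof: partition $V$ by membership vectors in the $\Gamma(i)$, take the lexicographically minimal nonempty class $V_z$, and show that either (for $z\ne\vec 0$) every $v\in V_z$ has all its out-neighbours confined to at most $k$ sets $\Gamma(i)$ in each of which it has fewer than $2\ell^k$ out-neighbours, contradicting $\delta^+(G)\ge 2k\ell^k$, or (for $z=\vec 0$) the class $V_{\vec 0}$ induces a subgraph of minimum out-degree at least $d(\mathcal{P})$, which must contain an in-star centre, contradicting $V_{\vec 0}\cap\Gamma(0)=\emptyset$. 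Without this argument, or an equivalent replacement, your induction on $|U|$ does not close, so the proposal as written is not a proof.
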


\begin{corollary}\label{cor:twosinks}
Every grounded tree with at most two vertices of in-degree at least $2$ is $\delta^+$-enforcible. 
\end{corollary}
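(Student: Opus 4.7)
The plan is to deduce this corollary directly from Theorem~\ref{thm:groundedclass}. Setting $U = \{v \in V(T) : \deg^-(v) \geq 2\}$ with $|U| \le 2$, I need only verify that the minimal subtree $T'$ of $T$ containing $U$ is an out-arborescence; Theorem~\ref{thm:groundedclass} then yields the conclusion.

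For $|U| \le 1$ the subtree $T'$ consists of at most a single vertex and no edges, so it is trivially an out-arborescence. (If one prefers, when $|U|=0$ every vertex of $T$ has in-degree at most $1$, so $T$ itself is an out-arborescence, which is $\delta^+$-enforcible by a straightforward greedy embedding.) Assume therefore $|U| = 2$ and write $U = \{u_1, u_2\}$; then $T'$ is the unique path $P$ from $u_1$ to $u_2$ in $T$. Since $T$ is grounded, $h_T(u_1) = h_T(u_2)$, while consecutive heights along $P$ differ by exactly $\pm 1$.

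I claim $P$ is V-shaped: traversing $P$ from $u_1$ to $u_2$, the heights strictly decrease to a unique minimum vertex $r$ and then strictly increase. Granting the claim, every edge of $P$ is oriented away from $r$, so $P$ is an out-arborescence rooted at $r$, and Theorem~\ref{thm:groundedclass} completes the proof. To establish the claim, suppose some interior vertex $w$ of $P$ is a strict local maximum of the height sequence, i.e., both $P$-neighbors of $w$ have strictly smaller height. Then both $P$-edges at $w$ point into $w$, giving $\deg^-_T(w) \ge 2$ and hence $w \in U$. But $w$ is distinct from both endpoints $u_1$ and $u_2$, contradicting $|U|=2$. An integer sequence with $\pm 1$ steps, equal endpoints, and no interior strict local maximum must first descend to a unique minimum and then ascend, so the claim follows.

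Once one makes the observation that the grounding condition combined with $|U|\le 2$ forbids both branching and zigzagging on the path between $u_1$ and $u_2$, the reduction to Theorem~\ref{thm:groundedclass} is immediate, and I foresee no genuine obstacle beyond writing out the V-shape argument cleanly.
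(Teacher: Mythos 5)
Your proof is correct and matches the paper's (implicit) intent: the corollary is stated in the paper without proof as an immediate consequence of Theorem~\ref{thm:groundedclass}, and your verification that the path between the two in-degree-$\ge 2$ vertices must be V-shaped (hence an out-arborescence) is exactly the missing routine check. No issues.
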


\noindent
We remark that there has been a considerable body of past and recent work (\cite{jackson1981long, kathapurkar2022spanning, KLIMOSOVA2023113515, penev2025twoblockpathsorientedgraphs, skokan2024alternating}, see also~\cite{Stein_2024} for a survey) on finding oriented trees in digraphs and oriented graphs of large minimum \emph{semi-degree} $\delta^0(G)$, which is the minimum of all in- and out-degrees in $G$. The focus there is different, namely on explicit bounds on $\delta^0(G)$, as any fixed oriented tree can be greedily embedded into 
a digraph of sufficiently large minimum semi-degree. That being said, our last result deals with explicit bounds in the minimum out-degree setting. 

Let $\smkl$ be the $(k-1)$-subdivision of the in-star with $\ell$ leaves. An essential tool in our proof of Theorem~\ref{thm:groundedclass} is the existence of $\smkl$ in digraphs of large minimum out-degree $d$, established in~\cite{aboulker1610subdivisions}; an inspection of the proof shows that (in the proof) $d$ needs to be at least $\ell^{k!}$. In order to achieve better, perhaps even tight, quantitative bounds in Theorem~\ref{thm:groundedclass} and towards Conjecture~\ref{conj:KAMAK-tree}, it would be desirable to find tight out-degree bounds for the containment of $\smkl$. In this regard, we make the following conjecture.

\begin{conjecture}[Giant spider conjecture]
For every $k\geq 2$ and $\ell\geq 1$
\begin{enumerate}
    \item[(i)] any digraph $G$ with $\delta^+(G)\geq k\ell$ contains $\smkl$ as a subgraph.
    \item[(ii)] any oriented graph $G$ with $\delta^+(G)\geq k\ell/2$ contains $\smkl$ as a subgraph. 
\end{enumerate}
\end{conjecture}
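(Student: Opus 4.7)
My plan is induction on $k\ge 2$, with the $k=2$ case of part~(i) serving as the crucial base. The paper already proves this case up to a factor of less than $4$, so the first task is to sharpen that to the tight constant. A double-counting identity
\[
\sum_v d^-(v)\,d^+(v) \;\ge\; 2\ell \sum_v d^-(v) \;\ge\; n(2\ell)^2
\]
yields a vertex $v^\star$ that is the endpoint of at least $(2\ell)^2$ directed in-walks of length~$2$. The remaining task is to extract $\ell$ internally vertex-disjoint such walks. I would formulate this as a matching question in the bipartite graph between $N^-(v^\star)$ and $V(G)\setminus(\{v^\star\}\cup N^-(v^\star))$, with $uw$ an edge whenever $w\to u$ in $G$. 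A Hall-type argument exploiting that each outside vertex $w$ has out-degree $\ge 2\ell$ in $G$ should handle the generic case; the delicate part is controlling those vertices in $N^-(v^\star)$ that are sources of $G$, which must be done by choosing $v^\star$ so that $|N^-(v^\star)|$ is much larger than the number of sources it captures.

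For the inductive step, assume $\smkl$ is forced by $\delta^+(G)\ge k\ell$, and consider $G$ with $\delta^+(G)\ge (k+1)\ell$. The natural peeling procedure picks $v^\star$ by averaging so that many length-$(k+1)$ in-walks terminate at $v^\star$, and by averaging again along the first edge locates an in-neighbour $u$ of $v^\star$ that is the endpoint of many length-$k$ in-walks. Iterating this along $\ell$ carefully chosen in-neighbours and invoking the induction hypothesis on auxiliary digraphs encoding the ``second-layer'' subproblems should produce $S^-_{k+1,\ell}$, provided the merging step preserves the tight out-degree budget. A key auxiliary lemma would state that removing $O(\ell)$ designated vertices drops the minimum out-degree by at most $\ell$.

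The main obstacle is part~(ii), which strictly generalizes Thomass\'e's long-standing path conjecture (the case $\ell=1$), where only the $3d/2$ bound of Cheng and Keevash~\cite{cheng2024length} is currently known. Even granting Thomass\'e, the branching of $\smkl$ adds essentially new difficulties, the crux being to convert an abundance of walks into internally vertex-disjoint paths of length \emph{exactly}~$k$ without losing constants. Naive Menger-type extraction loses a polynomial factor in~$\ell$. I would therefore refine the averaging/matching approach into a discharging scheme on the reverse BFS layers from $v^\star$, distributing the budget $k\ell$ (respectively $k\ell/2$ in the oriented case) evenly across the $\ell$ legs and the $k$ subdivision steps, and additionally exploiting the absence of $2$-cycles in the oriented setting to save the factor of~$2$. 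Making such a discharging argument sharp is where I expect the real work to lie.
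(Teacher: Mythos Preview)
The statement you are attempting to prove is a \emph{conjecture}: the paper does not prove it, and indeed explicitly flags part~(ii) at $\ell=1$ as Thomass\'e's conjecture, which remains open. So there is no ``paper's own proof'' to compare against, and your proposal is not a proof but a sketch of a research programme with substantial missing pieces.

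Concretely, even your base case $k=2$ of part~(i) has a genuine gap. The averaging step is fine: with $\delta^+(G)\ge 2\ell$ one gets $\sum_v d^-(v)d^+(v)\ge 2\ell\sum_v d^-(v)=2\ell\,|E|\ge 4\ell^2 n$, hence some $v^\star$ receives at least $4\ell^2$ directed $2$-walks. But turning $4\ell^2$ walks into $\ell$ internally vertex-disjoint $2$-paths is exactly where the paper loses the factor of roughly $1.78$ in Theorem~\ref{thm:linear_spiders}, and your Hall-type sketch does not close that gap. The difficulty is that all $4\ell^2$ walks may use only a handful of middle vertices (each of in-degree up to $2\ell$), and the ``outside'' endpoints need not lie outside $N^-(v^\star)$ at all; your bipartite matching setup between $N^-(v^\star)$ and $V\setminus(\{v^\star\}\cup N^-(v^\star))$ discards precisely the walks whose first vertex is itself an in-neighbour of $v^\star$, and nothing in the averaging guarantees enough survive. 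The remark about ``sources of $G$'' is a red herring, since $\delta^+\ge 2\ell$ says nothing about in-degrees.

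For the inductive step and for part~(ii), the obstacles you yourself name are fatal rather than merely ``where the real work lies'': your scheme would in particular prove Thomass\'e's conjecture, and the vague ``discharging on reverse BFS layers'' with an even split of the budget is not an argument. The auxiliary lemma you propose (removing $O(\ell)$ vertices drops $\delta^+$ by at most $\ell$) is false as stated without further structure. In short, this is an open problem; what the paper actually establishes is only the approximate $k=2$ digraph case (Theorem~\ref{thm:linear_spiders}).
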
 

\noindent
These bounds would be tight, by the examples of the complete digraph and a regular tournament of order $k\ell$, respectively. 
Note that for $\ell=1$, when $\smkl$ is the path of length $k$, the first statement is the obvious greedy algorithm bound, while the second is Thomass\'e's conjecture~\cite{bang2008digraphs, sullivan2006summary}, mentioned earlier. Addressing the other `extreme' case $k=2$, we prove a linear bound.
\begin{theorem}\label{thm:linear_spiders}
For every $\ell\geq 1$, any digraph $G$ with $\delta^+(G) \geq (1+\sqrt{5})\ell \approx3.23 \cdot \ell$ contains $S^-_{2,\ell}$ as a subgraph.
\end{theorem}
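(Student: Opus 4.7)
My plan is to argue by contradiction: assume $G$ has $\delta^+(G) > k\ell$ with $k := \frac{3+\sqrt{17}}{2}$ (so that $k^2 = 3k+2$) yet contains no copy of $S^-_{2,\ell}$. I start by choosing a vertex $c$ of maximum in-degree $D := \deg^-(c)$; by averaging $\sum_v \deg^-(v) = \sum_v \deg^+(v) \geq \delta^+(G)\cdot|V(G)|$, we have $D \geq \delta^+(G) > k\ell$. Set $A := N^-(c)$; then $S^-_{2,\ell}$ with center $c$ exists if and only if there are $\ell$ pairwise vertex-disjoint ordered pairs $(u_i,v_i)$ with $u_i \to v_i$ in $G$, $v_i \in A$, and no vertex equal to $c$.

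Attempting to build such a collection greedily, suppose the procedure fails at some step $i^* \leq \ell$. Let $S$ be the set of $2i^*-1 \leq 2\ell-1$ vertices used so far (including $c$), set $A' := A \setminus S$ and $W := V(G) \setminus (A' \cup S)$. The failure forces every in-neighbor of every vertex in $A'$ to lie in $S$, which yields the structural consequences (i) $A'$ induces no edges, (ii) every $v \in A'$ satisfies $\deg^-(v) \leq 2\ell-1$, and (iii) no vertex of $W$ points into $A'$, hence $\delta^+(G[W]) \geq \delta^+(G) - (2\ell-1)$.

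I would then use counting to bound $|A'|$. Comparing the total in-degree $\sum_v \deg^-(v) \geq \delta^+(G)\cdot n$ against (ii) and the generic bound $\deg^-(v) \leq D$ gives $|A'|(D - 2\ell + 1) \leq n(D - \delta^+(G))$; separately, by (i) the at least $\delta^+(G)\cdot|A'|$ out-edges from $A'$ land in $V(G) \setminus A'$, and averaging against the in-degree cap $D$ yields $|A'|(D + \delta^+(G)) \leq Dn$. Combined with the lower bound $|A'| \geq D - 2\ell + 2$, these produce two inequalities in $D$, $\ell$, and $n$.

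The main obstacle is turning these bounds into a contradiction with $\delta^+(G) > k\ell$, rather than the much weaker $\delta^+(G) > 2\ell$. I expect one must iterate the argument: pick a secondary center $c' \in V(G) \setminus A'$ with many in-edges from $A'$ (guaranteed by averaging), and build $S^-_{2,\ell}$ around $c'$ whose middle vertices come from $N^-(c')\cap A'$ and whose leaves must lie in $S$ by (ii), distributed via Hall's theorem on a small bipartite graph. Balancing the two passes, the simultaneous failure of both produces an inequality of the shape $D^2 \leq 3D\ell + 2\ell^2$; this contradicts $D > k\ell$ precisely at the positive root of $x^2 - 3x - 2 = 0$, which is the stated constant $\frac{3+\sqrt{17}}{2}$.
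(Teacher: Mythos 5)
Your setup through the structural consequences (i)--(iii) is sound, but it only re-derives facts that already follow from $\delta^+(G)>2\ell$, and the step that would actually produce the constant $\frac{3+\sqrt{17}}{2}$ is missing: you assert that ``balancing the two passes'' yields $D^2\le 3D\ell+2\ell^2$ without deriving it, and the sketched second pass does not go through. Two concrete problems. First, both of your counting inequalities involve $n=|V(G)|$, over which you have no control; for $n$ large relative to $D$ they are vacuous, and likewise the claim that a secondary centre $c'$ with many in-edges from $A'$ is ``guaranteed by averaging'' is false in general --- the at least $\delta^+(G)\,|A'|$ edges leaving $A'$ may be spread over arbitrarily many vertices, each receiving only a bounded number of them. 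Second, even granted such a $c'$, the Hall's-theorem step can fail: condition (ii) only says $N^-(v)\subseteq S$ for each $v\in A'$, and nothing prevents every vertex of $A'$ from having the same single in-neighbour in $S$, in which case no system of distinct leaves for the chosen middle vertices exists.

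The paper's proof avoids both issues and is structurally different. It first deletes edges so that $\deg^+(v)=d$ exactly for every $v$ (a normalization the paper calls crucial, and which your argument lacks), splits $V$ into $A=\{v:\deg^-(v)\ge 2\ell\}$ and its complement $B$, disposes of the case where some $r\in A$ has $\deg^-_A(r)\ge 2\ell$ greedily, and then counts directed $2$-paths ending in $A$ via the identity
$\lvert V\rightarrow B\rightarrow A\rvert=\lvert A\rightarrow V\rightarrow V\rvert-\lvert A\rightarrow A\rightarrow V\rvert+\lvert B\rightarrow B\rightarrow V\rvert-\lvert V\rightarrow B\rightarrow B\rvert$,
which gives $\lvert V\rightarrow B\rightarrow A\rvert\ge d(d-2\ell)\lvert A\rvert$. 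Pigeonholing over the $\lvert A\rvert$ possible endpoints --- rather than over all $n$ vertices --- eliminates $n$ entirely, and a maximality argument on partial spiders centred at the resulting vertex $a$ yields $s\ge d(d-2\ell)/(d+2\ell)>\ell$. That is where the quadratic $d^2-3\ell d-2\ell^2>0$, and hence the root $\frac{3+\sqrt{17}}{2}$, actually come from; your proposal never reaches an analogous inequality.
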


\noindent
The rest of the paper is structured as follows. In Section~\ref{sec:leveldconstr} we provide a construction proving Theorem~\ref{thm:level}. Section~\ref{sec:mainproof} contains the proof of our main result, Theorem~\ref{thm:groundedclass}. Finally, in Section~\ref{sec:spiders} we prove Theorem~\ref{thm:linear_spiders}. 

\noindent
\paragraph*{Notation.} Most of our notation is standard. $G=(V,E)$ denotes a \emph{digraph} (directed graph) with the vertex set $V$ and edge set $E\subseteq\{(u,v)\in V\times V: u\neq v\}$. That is, we do not allow loops and multiple copies of the same edge, but we do allow two edges in opposite directions between a pair of vertices. $G$ is an \emph{oriented graph} if between any two vertices there is at most one edge. 

An edge $(u,v)$ is considered oriented from $u$ to $v$. The \emph{in- and out-neighbourhoods} of a vertex $v\in V$ are defined as $N^-(v)=\{u\in V: (u,v)\in E\}$ and $N^+(v)=\{u\in V: (v,u)\in E\}$, respectively. The \emph{in- and out-degrees} of $v$ are $\deg^-(v)=\card{N^-(v)}$ and $\deg^+(v)=\card{N^+(v)}$, respectively. The \emph{minimum out-degree} in $G$ is $\delta^+(G)=\min\{\deg^+(v): v\in V\}$.
For a vertex set $W\subseteq V$ we use $G[W]$ to denote the subgraph of $G$ induced on $W$. Furthermore, for a vertex $v\in V$ we denote $N^-_W(v)=N^-(v)\cap W$, $\deg^-_W(v)=\card{N^-_W(v)}$, and similarly for $N^+_W(v)$ and $\deg^+_W(v)$. 

A \emph{directed path} is an orientation of an undirected path which can be traversed from one end to the other following the orientations of the edges. A \emph{subdivision} of a digraph $G$ is obtained by replacing each edge of $G$ with a directed path (possibly of length $1$) in the same direction. When each edge is replaced by a path of length exactly $k$, we speak of a $(k-1)$-subdivision. An \emph{oriented tree} is an orientation of an undirected tree. An \emph{in-arborescence}/\emph{out-arborescence} is an oriented tree in which all edges are oriented towards/away from a designated root vertex. By $B^+_{k,\ell}$ we denote the $\ell$-branching out-arborescence of depth $k$, i.e. the complete $\ell$-ary tree of depth (distance from the root to the leaves) $k$ oriented away from the root.
An \emph{in-star}/\emph{out-star} is the orientation of an undirected star towards/away from its centre. We denote by $\smkl$ the $(k-1)$-subdivision of the in-star with $\ell$ leaves, and the directed paths from the leaves of $\smkl$ to its centre are referred to as the \emph{rays}.
\section{The level digraph construction}\label{sec:leveldconstr}

In this section we prove that every $\delta^+$-enforcible tree must be grounded. 

\begin{definition}
    A \emph{level digraph} $G_{k,d} = (V,E)$ is the digraph with the vertex set
\[
V = \{ v_{i,j} : 0 \le i \le k, \ 1 \le j \le d^{i+1} \},
\]
and the edge set
\[
E = \biggl( \bigcup_{i=0}^{k-1} E_{i,i+1} \biggr) \cup E_{k,0},
\]
where, for \(0 \le i < k\), 
\[
E_{i,i+1} = \Bigl\{ ( v_{i,j}, v_{i+1,(j-1)d + \ell}) : 1 \le j \le d^{i+1},\; 1 \le \ell \le d \Bigr\},
\]
and
\[
E_{k,0} = \Bigl\{ (v_{k,j},\, v_{0,\ell}) : 1 \le j \le d^{k+1},\; 1 \le \ell \le d \Bigr\}.
\]

\end{definition}

In other words, to construct $G_{k,d}$ we take $d$ disjoint copies of $B^+_{k,d}$ and add the edges from every leaf to every root.

\begin{figure}[htbp]
  \centering
  \includegraphics[scale=0.85]{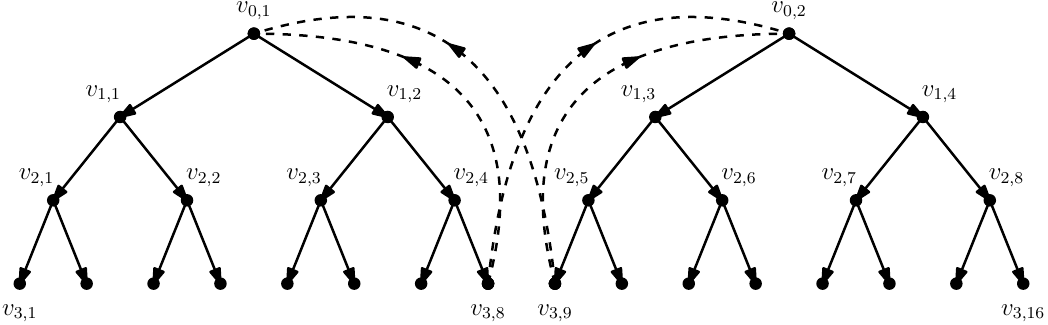}
  \caption{The level digraph \(G_{3,2}\).} 
\end{figure}

\begin{observation}\label{obs:level_graph}
    In $G_{k,d} = (V,E)$ for all $v \in V$ we have: 
    \begin{itemize}
        \item $ \deg^+(v) = d$. 
        \item \begin{flushleft}
        $\displaystyle
        \deg^-(v)=
        \begin{cases}
          d^{k+1}, & \text{if } v = v_{0,j} \text{ for  some } 1 \le j \le d, \\
          1, & \text{otherwise}.
        \end{cases}$
        \end{flushleft}
     \end{itemize}
\end{observation}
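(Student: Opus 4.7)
The plan is a direct verification from the edge-set definition. The key observation is that for each $0\le i\le k-1$, the map $(j,\ell)\mapsto (j-1)d+\ell$ is a bijection from $\{1,\dots,d^{i+1}\}\times\{1,\dots,d\}$ to $\{1,\dots,d^{i+2}\}$; this is just the Euclidean division algorithm, writing $m-1=(j-1)d+(\ell-1)$ with $\ell-1\in\{0,\dots,d-1\}$ and $j-1\in\{0,\dots,d^{i+1}-1\}$.

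For the out-degree, I would note that the edge sets $E_{0,1},\dots,E_{k-1,k},E_{k,0}$ partition $E$ by source layer. Each $v_{i,j}$ with $i<k$ is the source of exactly the $d$ edges $\{(v_{i,j},\,v_{i+1,(j-1)d+\ell}):1\le\ell\le d\}\subseteq E_{i,i+1}$, and no other edge of $G_{k,d}$ has it as a source; similarly, each $v_{k,j}$ is the source of exactly the $d$ edges $\{(v_{k,j},v_{0,\ell}):1\le\ell\le d\}\subseteq E_{k,0}$. Hence $\deg^+(v)=d$ for every $v$.

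For the in-degree, I would split by layer. Since $E_{i,i+1}$ only targets level $i+1$ and $E_{k,0}$ only targets level $0$, the incoming edges of a vertex $v_{i+1,m}$ with $0\le i<k$ all lie in $E_{i,i+1}$, and the bijection above yields a unique pair $(j,\ell)$ with $(j-1)d+\ell=m$ and $1\le j\le d^{i+1}$, so $\deg^-(v_{i+1,m})=1$. For $v_{0,\ell}$ with $1\le\ell\le d$ the incoming edges lie entirely in $E_{k,0}$, and every $1\le j\le d^{k+1}$ contributes exactly one such edge, so $\deg^-(v_{0,\ell})=d^{k+1}$.

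There is no genuine obstacle here: once one recognises the indexing scheme $(j-1)d+\ell$ as the standard enumeration of $\{1,\dots,d^{i+2}\}$ in $d^{i+1}$ consecutive blocks of length $d$, the observation reduces to bookkeeping.
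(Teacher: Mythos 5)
Your proof is correct. The paper offers no explicit proof of this observation, relying instead on the remark that $G_{k,d}$ is $d$ disjoint copies of $B^+_{k,d}$ with all leaf-to-root edges added; your index bookkeeping, via the bijection $(j,\ell)\mapsto(j-1)d+\ell$, is just a careful verification of that same picture.
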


\begin{proof}[Proof of Theorem~\ref{thm:level}]    
       
    Let $T$ be a $\delta^+$-enforcible tree. We aim to show that $T$ is grounded, i.e., the height function $h_T$ is constant on $U = U(T)=\{v \in V(T) : \deg^-(v)\ge 2\}$. Let us assume that $U\neq \emptyset$, else $T$ is trivially grounded.
    Since $T$ is $\delta^+$-enforcible, there exists $d = d(T)$ such that every digraph $G$ with $\delta^+(G) \ge d$ contains $T$ as a subgraph. In particular, $G=G_{2t, d}=(V,E)$, where $t = \card{V(T)}$, contains $T$ as a subgraph, and let $\phi: T\rightarrow G$ be a witnessing embedding. For each $0 \le i \le 2t $, set $L_i = \{v_{i,j}  \in V\, \colon \, 1 \le j \le d^{i+1}\}$.  
    
    By Observation~\ref{obs:level_graph}, a vertex of $G$ has in-degree more than $1$ if and only if it belongs to $L_0$. Therefore $\phi(U)\subseteq L_0$, and in particular, $\phi(T)\cap L_0\neq \emptyset$. This implies $\phi(T)\cap L_t=\emptyset$, as $t=\card{V(T)}$ and the edges in $G$ are between $L_i$ and $L_{i+1}$ modulo $2t+1$.  So $\phi$ embeds $T$ in $G[V\setminus L_{t}]$. Note now that $G[V\setminus L_{t}]$ has a well-defined height function $\psi(v_{i,j})=i$ (equivalently, $G[V\setminus L_{t}]$ is homomorphic to a directed path), so $\psi \circ \phi$ is a height function of $T$ which is constant on $U$. Hence, $T$ is grounded.
\end{proof}
\section{\texorpdfstring{A new family of $\delta^+$-enforcible trees}{A new family of δ+-enforcible trees}}\label{sec:mainproof}

In this section we prove Theorem~\ref{thm:groundedclass}.
Let $T$ be a tree satisfying the assumptions of Theorem~\ref{thm:groundedclass}, and let $T^*$ be its subtree obtained by iteratively removing leaves of in-degree $1$ until none remain. Note that every vertex of $T^*$ has the same in-degree in $T^*$ as in $T$, in particular, $U(T^*) = U(T)$. Hence, $T^*$ satisfies the assumptions of Theorem~\ref{thm:groundedclass}. On the other hand, if $T^*$ is $\delta^+$-enforcible then so is $T$, by a greedy embedding. Thus it suffices to prove that $T^*$ is $\delta^+$-enforcible. Furthermore, if $\card{U(T)}=\card{U(T^*)} = 0$, then $T^*$ is a single vertex, which is trivially $\delta^+$-enforcible, and if $\card{U(T)}=\card{U(T^*)} = 1$, then $T^*$ is a subdivision of an in-star, which is also $\delta^+$-enforcible, as was shown in~\cite{aboulker1610subdivisions} (see Proposition~\ref{prop:abetal} below). Hence, relabelling, we may assume that $T$ does not have any leaf of in-degree $1$ and that $U = U(T)$ is of size at least $2$. 

Let $T'$ be the minimal subtree of $T$ containing $U$. By our assumption, $T'$ is an out-arborescence, and let $r$ be its root vertex. Since $|U|\geq 2$ and $T$ is grounded we have $r\notin U$. Moreover, since $T$ is grounded and $h_T$ must extend $h_{T'}$, the latter must be constant on the vertices of $U$. By the minimality of $T'$, $U$ must be the set of leaves of $T'$. 

Recall that $S^-_{k,\ell}$ is the $(k-1)$-subdivision of the in-star with $\ell$ leaves and $B^+_{k,\ell}$ is the $\ell$-branching out-arborescence of depth $k$.
Let $T(k,\ell)$ denote the oriented tree created from $B^+_{k,\ell}$ by identifying each leaf with the centre of a new copy of $S^-_{k,\ell}$.

\begin{lemma}\label{lem:inarb}
$T$ is a subgraph of $T(k,\ell)$ for some sufficiently large $k$ and $\ell$. 
\end{lemma}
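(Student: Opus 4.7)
The plan is to first show that, outside of $T'$, the graph $T$ consists only of directed paths attached via in-edges to vertices in $U\cup\{r\}$, and then to embed this structure directly into $T(k,\ell)$ for $k$ and $\ell$ sufficiently large.

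For the structural step, let $P$ be a connected component of the subgraph of $T$ induced on $V(T)\setminus V(T')$, together with its (unique) cut edge connecting $P$ to some $w\in V(T')$ at the $P$-endpoint $v_1$. I would first argue that this cut edge must be oriented from $P$ to $T'$, i.e.\ it is $(v_1,w)$. Indeed, if instead the edge were $(w,v_1)$, then since every vertex of $P$ has in-degree at most $1$ in $T$ (because $V(P)\cap U=\emptyset$) and $v_1$'s in-capacity is already taken by $w$, the whole of $P$ viewed as a tree rooted at $v_1$ would be forced to be an out-arborescence, whose (undirected) leaves are leaves of $T$ of in-degree $1$, contradicting our standing assumption. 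Consequently the cut edge $(v_1,w)$ contributes to $\deg^-_T(w)$ beyond $\deg^-_{T'}(w)$, so $w$ must lie in $U\cup\{r\}$: the middle vertices of $T'$ already saturate their in-capacity inside $T'$, and at most one pendant attaches to $r$ since $\deg^-(r)\leq 1$. Iterating the same reasoning inside $P$: at $v_1$, every $P$-neighbour $c$ must satisfy $(c,v_1)\in E(T)$ (else the subtree below $c$ reproduces the contradictory out-arborescence), and then $\deg^-(v_1)\leq 1$ forces $v_1$ to have at most one $P$-neighbour. An induction on $|V(P)|$ then shows that $P$ is a directed path ending at $v_1$.

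Given this decomposition, the embedding is straightforward. Let $k_1$ be the depth of $T'$, $k_r$ the length of the pendant path at $r$ (zero if none), $K$ the maximum length of a pendant path at any vertex of $U$, and $\Delta$ the maximum of the branching factor of $T'$ and $\max_{u\in U}(\deg^-(u)-1)$. For $k\geq \max(k_1+k_r,K)$ and $\ell\geq \Delta$, I would embed $T'$ into $B^+_{k,\ell}$ as a subtree rooted at a vertex of depth $k-k_1$ in $B^+_{k,\ell}$, so that $U$ lands among the depth-$k$ leaves of $B^+_{k,\ell}$; this is possible because $\ell$ exceeds the branching factor of $T'$. Each such leaf is identified in $T(k,\ell)$ with the centre of a copy of $S^-_{k,\ell}$, supplying $\ell$ directed rays of length $k$ terminating at the image of the corresponding $u$; the $\deg^-(u)-1$ pendant paths at $u$ are embedded into distinct such rays via their terminal segments, which fits since their lengths are at most $k$ and their number is at most $\ell$. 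Finally, if $\deg^-(r)=1$, the pendant path at $r$ is embedded along the directed chain of length $k-k_1\geq k_r$ in $B^+_{k,\ell}$ from the $B^+$-root to the image of $r$, using only its last $k_r$ edges. The three families of images are vertex-disjoint by construction of $T(k,\ell)$.

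The main obstacle I anticipate is the structural analysis: one has to simultaneously exploit the constraints that every vertex outside $T'$ has in-degree at most $1$ and that $T$ has no leaf of in-degree $1$, in order to rule out every pendant configuration other than a directed path attached by an in-edge at a vertex of $U\cup\{r\}$. Once the pendant structure is pinned down, the embedding reduces to a routine depth-and-branching count inside $T(k,\ell)$.
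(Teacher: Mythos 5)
Your proof is correct and follows essentially the same route as the paper: you establish that $T\setminus T'$ decomposes into directed paths oriented into $T'$ that can attach only at vertices of $U$ or (at most once) at $r$, and then embed the out-arborescence part into $B^+_{k,\ell}$ with $U$ on the leaves and the pendant in-paths at each $u\in U$ into the attached copies of $S^-_{k,\ell}$. The only cosmetic difference is that the paper absorbs the pendant path at $r$ into the out-arborescence before embedding it, whereas you route it separately along the spine of $B^+_{k,\ell}$.
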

\begin{proof}
By definition of $T'$, every vertex $v \in V(T) \setminus V(T')$ has $\deg^-(v) \leq 1$.
Moreover, it has $\deg^+(v) \leq 1$, as otherwise $T$ would contain a leaf of in-degree $1$.
Therefore, $T \setminus T'$ is a collection of vertex disjoint paths $\mathcal{F}$, each of which is directed towards $T'$.
Since each $v'\in V(T')\setminus\{r\}$ has an in-neighbour in $T'$, the paths in $\mathcal{F}$ can only connect to $T'$ via $r$ or a vertex of $U$.
Moreover, the former holds for at most one path $P_r\in \mathcal{F}$.

It follows that the subtree of $T$ induced by $V(T') \cup V(P_r)$ (with $P_r$ possibly empty) is an out-arborescence and can be embedded in $B^+_{k,\ell}$ for some sufficiently large $k$ and $\ell$ such that $U$, the set of leaves of $T'$, maps to the leaves of $B^+_{k,\ell}$.
The remaining paths in $\mathcal{F}$ can be naturally grouped into disjoint in-stars, centred in the vertices of $U$. Increasing the values $k,\ell$ if necessary, we obtain that $T$ is a subgraph of $T(k,\ell)$.
\end{proof}
\noindent
Consequently, in order to prove Theorem~\ref{thm:groundedclass}, it is enough to consider the trees $T = T(k,\ell)$. We now recall the following result from~\cite{aboulker1610subdivisions}, which will play a crucial rule in our proof.  

\begin{proposition}[\cite{aboulker1610subdivisions}]\label{prop:abetal}
There exists a function $f(k,\ell)$ such that every digraph $G$ with $\delta^+(G)\geq f(k,\ell)$ contains $S^-_{k,\ell}$ as a subgraph.     
\end{proposition}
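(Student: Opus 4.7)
My plan is to proceed by induction on $k$. For the base case $k=1$, $S^-_{1,\ell}$ is just the in-star with $\ell$ leaves, and since $\sum_{v}\deg^-(v)=\sum_{v}\deg^+(v)\ge \delta^+(G)\cdot|V(G)|$, the average in-degree of $G$ is at least $\delta^+(G)\ge \ell$, so some vertex has in-degree at least $\ell$, and $f(1,\ell):=\ell$ suffices.

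For the inductive step, assume $f(k-1,\cdot)$ is defined. I would set $f(k,\ell)$ to be sufficiently large and, given $G$ with $\delta^+(G)\ge f(k,\ell)$, proceed by iterative extraction of rays. First, by averaging, pick a candidate center $c\in V(G)$ with $\deg^-(c)\ge f(k,\ell)$. Then greedily extract the $\ell$ rays of the desired $S^-_{k,\ell}$ ending at $c$ one at a time: at step $i$, maintain a set $W_i\subseteq V(G)\setminus\{c\}$ of available vertices and a set $U_i\subseteq N^-(c)$ of unused in-neighbors of $c$, find a directed path $P_i$ of length $k-1$ in $G[W_i]$ ending at some $u\in U_i$, remove its vertices from $W_i$ and $u$ from $U_i$, and append the edge $(u,c)$ to form the $i$-th ray. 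Provided $f(k,\ell)\gg \ell k$, the minimum out-degree of $G[W_i]$ stays above $f(k-1,m)$ for an appropriate $m$ throughout the $\ell$ iterations.

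The main obstacle lies in executing a single extraction step: locating a directed path of length $k-1$ whose endpoint lies in the prescribed set $U_i$. The induction hypothesis gives an $S^-_{k-1,m}$ in $G[W_i]$ centered at some vertex $c'$, but $c'$ need not lie in $U_i$. The standard way to address this is to apply induction with $m$ taken astronomically large compared to the other parameters, so that via a pigeonhole-style argument one can steer the endpoint into $U_i$, using that $\deg^-(c)$ was chosen huge; alternatively, one can re-extract many disjoint copies of $S^-_{k-1,m}$ with distinct centers until one of the centers happens to land in $U_i$, which is guaranteed to occur provided $|U_i|$ dwarfs the number of iterations. Formalising this targeting step, together with controlling the cascading out-degree losses from up to $\ell k$ deletions, is the technical heart of the proof; the need to keep amplifying the parameter $m$ across iterations of the induction is what produces the tower-type bound $\ell^{k!}$ referenced in the paper.
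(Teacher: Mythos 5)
The paper does not actually prove Proposition~\ref{prop:abetal}; it imports it from~\cite{aboulker1610subdivisions}, so there is no in-paper argument to compare against and your proposal must stand on its own. Your base case is correct, and the outer bookkeeping of the greedy extraction (at most $k\ell$ deleted vertices, so the minimum out-degree of $G[W_i]$ stays large) is fine. The gap is precisely at the step you yourself flag as the technical heart, and neither mechanism you offer for it works. First, fixing the centre $c$ in advance by an averaging argument and then trying to route rays into $U_i\subseteq N^-(c)$ is already doomed in general: a vertex of large in-degree may have \emph{all} of its in-neighbours of in-degree $0$ (picture many source vertices feeding into a dense core), in which case no directed path of positive length ends in $U_i$ at all, let alone $\ell$ disjoint ones of length $k-1$. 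Second, the induction hypothesis hands you a copy of $S^-_{k-1,m}$ centred at some $c'$; its rays end at $c'$, and no pigeonhole argument, however astronomically large $m$ is, converts a path ending at $c'$ into a path ending at a prescribed vertex of $U_i$. Third, re-extracting disjoint copies with distinct centres does not force a centre into $U_i$ no matter how much $|U_i|$ ``dwarfs the number of iterations'': the admissible centres are dictated by the graph (they must in particular have in-degree at least $m$), and $U_i$ can be entirely disjoint from them, so there is no coupon-collector effect to invoke.

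That this targeting problem is genuinely hard is illustrated by the paper's own proof of the $k=2$ case (Theorem~\ref{thm:linear_spiders}): even there one cannot take an arbitrary high-in-degree centre, and the argument instead restricts to the set $A$ of vertices of in-degree at least $2\ell$, counts two-edge paths $V\rightarrow B\rightarrow A$ to locate a centre $a$ whose in-neighbourhood supports many extendable paths, and finishes with a maximality argument. A correct proof of the general proposition (as in~\cite{aboulker1610subdivisions}) has to choose the centre and the rays \emph{simultaneously} --- e.g.\ by proving a stronger inductive statement about many disjoint in-paths converging to a common, not pre-specified, vertex --- rather than fixing the centre first and steering paths toward it. As written, your inductive step does not go through.
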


\noindent
Note that the graph $T(k,1)$ is isomorphic to $S^-_{k,2}$, which is known to be $\delta^+$-enforcible by Proposition~\ref{prop:abetal}.
Therefore, we may assume that $\ell \geq 2$.
We shall prove the following quantitative form of Theorem~\ref{thm:groundedclass}.\footnote{For clarity of presentation, we do not attempt to optimise the bounds.}

\begin{theorem}\label{thm:supergraph_of_normalized}
    Let $k \geq 1, \ell \geq 2$. Any digraph $G$ with $\delta^+(G) \geq f(k,3k\ell^{k+1})+2k\ell^k$ contains $T(k,\ell)$ as a subgraph.
\end{theorem}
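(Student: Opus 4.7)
The strategy is to apply Proposition~\ref{prop:abetal} to locate a large subdivided in-star $S^-_{k,M}$ in $G$, then use the extra $2k\ell^k$ out-degree to build the out-arborescence $B^+_{k,\ell}$, and finally attach copies of $S^-_{k,\ell}$ at the leaves of $B^+_{k,\ell}$ by re-routing the rays of $S^-_{k,M}$.

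First I apply Proposition~\ref{prop:abetal} with parameter $M := 3k\ell^{k+1}$ to obtain a copy of $S^-_{k,M} \subseteq G$ with centre $c$ and rays $P_1, \ldots, P_M$; let $y_i$ be the vertex of $P_i$ immediately preceding $c$ (so $(y_i,c) \in E(G)$) and write $\mathcal{Y} := \{y_1, \ldots, y_M\}$. This step consumes the $f(k, 3k\ell^{k+1})$ part of the hypothesis.

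Next I construct the out-arborescence $B^+_{k,\ell}$ inside $G$, vertex-disjoint from $V(S^-_{k,M}) \setminus \{c\}$. Since $\delta^+(G) - |V(S^-_{k,M}) \setminus \{c\}| \ge \delta^+(G) - kM \ge 2k\ell^k$ (using that $f(k,M) \ge kM$, which follows from Proposition~\ref{prop:abetal}), and since $|V(B^+_{k,\ell})| \le 2\ell^k$, a greedy embedding succeeds; I root $B^+_{k,\ell}$ at $c$ for concreteness. I select the leaves $v_1, \ldots, v_{\ell^k}$ of $B^+_{k,\ell}$ adaptively so that each has many in-edges from $\mathcal{Y}$ in $G$. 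For each leaf $v_j$, I then pick $\ell$ distinct rays $P_i$ with $(y_i, v_j) \in E(G)$ and replace the last edge $(y_i, c)$ with $(y_i, v_j)$; together with $v_j$, these yield an $S^-_{k,\ell}$ centred at $v_j$, and combined with $B^+_{k,\ell}$ this gives the desired copy of $T(k,\ell)$. Vertex-disjointness is automatic since the rays of $S^-_{k,M}$ are internally disjoint, the $v_j$'s lie outside $V(S^-_{k,M}) \setminus \{c\}$, and $c$ itself is not used in the embedded $T(k,\ell)$ (the rays no longer reach it).

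The main obstacle is the final selection: it amounts to an $\ell$-to-$1$ bipartite matching between the $\ell^k$ leaves of $B^+_{k,\ell}$ and $\mathcal{Y}$, with edges given by reversed $G$-edges. The factor-of-$3k$ excess of $M = 3k\ell^{k+1}$ over the $\ell^{k+1}$ required rays, combined with the adaptive choice of leaves with high $\mathcal{Y}$-in-degree, should yield Hall's condition via a careful counting argument. The counting is delicate because a naive double count over $V(G)$ is useless as $|V(G)|$ is unbounded; one would have to restrict the analysis to an appropriate vertex set of bounded size (for instance, vertices reachable in a few steps from $c$, or the out-neighbours of $\mathcal{Y}$ sitting in $V(G)\setminus V(S^-_{k,M})$), and verify that enough leaves with large $\mathcal{Y}$-in-degree exist inside this restricted set to support the greedy construction of $B^+_{k,\ell}$.
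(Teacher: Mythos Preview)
Your plan diverges from the paper's argument, and the step you yourself flag as ``the main obstacle'' is a genuine gap rather than a routine counting detail.

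The paper never finds a single giant spider and tries to re-route its rays. Instead it isolates a general lemma (Theorem~\ref{thm:outstar_with_leaves_property}): for any anti-monotone vertex property $\mathcal{P}$ that is guaranteed to hold at some vertex once $\delta^+\ge d(\mathcal{P})$, every digraph with $\delta^+\ge d(\mathcal{P})+2k\ell^k$ contains a copy of $B^+_{k,\ell}$ \emph{all of whose leaves satisfy $\mathcal{P}$}. One then applies this with $\mathcal{P}=$ ``is the centre of an $S^-_{k,h}$'' (for $h=3k\ell^{k+1}$), so that each leaf $w$ of the out-arborescence carries its \emph{own} spider $S(w)$; trimming each $S(w)$ down to $\ell$ rays disjoint from the rest is then a straightforward greedy step. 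The substance is in proving the lemma: one sets $\Gamma(0)=\{v:\mathcal{P}(v)\}$ and inductively $\Gamma(i)=\{v:\deg^+_{\Gamma(i-1)}(v)\ge 2\ell^k\}$, and shows $\Gamma(k)\neq\emptyset$ by a lexicographic argument on the indicator vectors $(z_0,\dots,z_k)$; once $\Gamma(k)\neq\emptyset$, the tree $B^+_{k,\ell}$ is grown greedily downward through the layers $\Gamma(k),\Gamma(k-1),\dots,\Gamma(0)$.

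Your re-routing scheme, by contrast, asks the penultimate vertices $y_i$ of one fixed spider centred at $c$ to send edges into the leaves of a $B^+_{k,\ell}$ that you build by following out-edges from $c$. There is no mechanism linking these two things: the greedy growth of $B^+_{k,\ell}$ controls out-neighbourhoods, not in-neighbourhoods, and the $y_i$ have large out-degree into all of $V(G)$, which is unbounded, not into any designated small set. Your proposed fixes (restrict to vertices reachable from $c$, or to $N^+(\mathcal{Y})$) do not create such a link either; nothing forces the out-ball of $c$ and the out-neighbourhood of $\mathcal{Y}$ to overlap in the structured way an $\ell$-matching would require. The difficulty is structural, not merely a delicate count: a single spider at $c$ carries no information about in-edges at other prospective centres. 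This is precisely why the paper's argument arranges for a \emph{separate} spider at each leaf, and the $\Gamma(i)$-layer machinery is what makes that possible.
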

\noindent
In fact, we shall prove a more general theorem stating that if $\delta^+(G)$ is sufficiently large, then $G$ contains a copy of $B^+_{k,\ell}$ whose leaves satisfy any \emph{$\delta^+$-common} vertex property. 

\begin{definition}\label{def:common_property}
    Let $\Pro$ be a vertex property in digraphs.
    We say that $\Pro$ is \emph{$\delta^+$-common} if
    \begin{enumerate}[label=(\roman*)]
        \item there is $d = d(\Pro)$ such that every $G$ with $\delta^+(G) \geq d$ contains a vertex satisfying $\Pro$,
        \item $\Pro$ is anti-monotone.
        That is, if $H$ is a subgraph of $G$ and a vertex $v \in V(H)$ satisfies $\Pro$ in $H$, then $v$ satisfies $\Pro$ in $G$.
    \end{enumerate}     
\end{definition}
\noindent
A trivial example of a $\delta^+$-common property is ``$v$ is a vertex.''
A far less trivial (and important for us) $\delta^+$-common property is ``$v$ is the centre of a copy of $S^-_{k,\ell}$.''

\begin{theorem}\label{thm:outstar_with_leaves_property}
    Let $\Pro$ be a $\delta^+$-common property, $k \geq 1, \ell \geq 2$, and let $G$ be a digraph with $\delta^+(G) \geq d(\Pro)+2k\ell^k$. Then $G$ contains a copy $B$ of $B^+_{k,\ell}$ such that all leaves of $B$ satisfy $\Pro$ in $G$.
\end{theorem}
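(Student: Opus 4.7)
My plan is to prove Theorem~\ref{thm:outstar_with_leaves_property} by induction on $k$, introducing at each step an auxiliary strengthening of $\Pro$ so that the disjointness requirement in the last layer of the tree reduces to a short counting check. The base case $k = 0$ is immediate: $B^+_{0,\ell}$ is a single vertex, and since $\delta^+(G) \geq d(\Pro) + 2 \cdot 0 \cdot \ell^0 = d(\Pro)$, part (i) of Definition~\ref{def:common_property} supplies one satisfying $\Pro$.

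For the inductive step ($k \geq 1$), I set $m := 2\ell^k$ and define the auxiliary property $\Pro^{(m)}$: ``$v$ has at least $m$ out-neighbors satisfying $\Pro$.'' Anti-monotonicity of $\Pro^{(m)}$ follows directly from that of $\Pro$. To bound $d(\Pro^{(m)})$: if no vertex in a digraph $H$ satisfies $\Pro^{(m)}$, write $V_H^\ast$ for the set of vertices of $H$ satisfying $\Pro$ in $H$; then every vertex of $H$ has at most $m - 1$ out-neighbors in $V_H^\ast$, so $\delta^+(H[V(H) \setminus V_H^\ast]) \geq \delta^+(H) - (m - 1)$. If the right-hand side is at least $d(\Pro)$, part (i) applied to $H[V(H) \setminus V_H^\ast]$, combined with anti-monotonicity of $\Pro$, produces a vertex of $V_H^\ast$ lying in $V(H) \setminus V_H^\ast$, a contradiction. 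Hence $d(\Pro^{(m)}) \leq d(\Pro) + 2\ell^k - 1$, and $\Pro^{(m)}$ is $\delta^+$-common.

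Now I apply the induction hypothesis to $\Pro^{(m)}$ and parameter $k - 1$: the required lower bound on $\delta^+(G)$ is $d(\Pro^{(m)}) + 2(k-1)\ell^{k-1} \leq d(\Pro) + 2\ell^k - 1 + 2(k-1)\ell^{k-1} \leq d(\Pro) + 2k\ell^k$, which holds by hypothesis. This produces a copy $B_0 \subseteq G$ of $B^+_{k-1, \ell}$ in which every leaf has at least $m$ out-neighbors in $V^\ast := \{v : v \text{ satisfies } \Pro \text{ in } G\}$. I then extend $B_0$ to the desired $B^+_{k, \ell}$ by processing the $\ell^{k-1}$ leaves of $B_0$ one at a time and choosing, for each leaf $u$, an $\ell$-element subset of $N^+(u) \cap V^\ast$ disjoint from the vertex set used so far. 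At each step the number of forbidden vertices is at most $|V(B_0)| + (\ell^{k-1}-1)\ell \leq 2\ell^{k-1} + \ell^k - \ell$, which is at most $m - \ell$ for $\ell \geq 2$, so at least $\ell$ valid choices remain.

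The main conceptual obstacle, sidestepped by this device, is the vertex-disjointness of the $\ell$ subtrees hanging off the root of $B^+_{k, \ell}$. A naive recursion — locate a root with $\ell$ ``good'' out-neighbors, each of which roots a $B^+_{k-1, \ell}$ with good leaves — would need those $\ell$ subtrees to be pairwise vertex-disjoint, and propagating such a constraint through the recursion looks awkward. By pushing the strengthening from $\Pro$ into $\Pro^{(m)}$, disjointness is handled exactly once, in the final greedy extension step, where it reduces to a short counting inequality.
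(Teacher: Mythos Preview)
Your proof is correct and takes a genuinely different route from the paper's. Both arguments share the high-level strategy of iterating the operation ``has many out-neighbours satisfying $\Pro$'' and then building the tree greedily layer by layer, but they establish the existence of a good root quite differently. The paper defines all the sets $\Gamma(0),\dots,\Gamma(k)$ at once (each with the uniform threshold $2\ell^k$) and proves $\Gamma(k)\neq\emptyset$ by a direct argument: it records for each vertex the binary signature $(z_0,\dots,z_k)$ of its memberships in the $\Gamma(i)$, takes the lexicographically smallest nonempty class, and derives a contradiction from the degree constraints. You instead package the single step ``$\Pro\mapsto\Pro^{(m)}$ is again $\delta^+$-common with $d(\Pro^{(m)})\leq d(\Pro)+m$'' as a lemma and induct on $k$; this completely sidesteps the signature/lex-minimality machinery and is noticeably shorter. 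The paper's formulation, on the other hand, makes the layered structure $\Gamma(0)\supseteq\cdots$ (well, not containment, but the cascade) explicit in one shot and uses a single uniform threshold, which some readers may find conceptually cleaner.

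One small quibble: your bound $d(\Pro^{(m)})\leq d(\Pro)+m-1$ can fail in the degenerate case $d(\Pro)=0$ (take $\Pro=$ ``$v$ is a vertex'' and $H$ the complete digraph on $m$ vertices). The safe bound is $d(\Pro^{(m)})\leq d(\Pro)+m$, and with it your inductive inequality $2\ell^k+2(k-1)\ell^{k-1}\leq 2k\ell^k$ still holds for all $k\geq 1$ and $\ell\geq 2$, so nothing in the argument is affected.
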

\noindent
Theorem~\ref{thm:supergraph_of_normalized} is a direct corollary of Theorem~\ref{thm:outstar_with_leaves_property}. 
\begin{proof}[Proof of Theorem~\ref{thm:supergraph_of_normalized}]
    Let $h=3k\ell^{k+1}$ and $\Pro$ be the property ``$v$ is the centre of a copy of $S^-_{k,h}$.'' By Proposition~\ref{prop:abetal}, $\Pro$ is $\delta^+$-common with $d(\Pro)=f(k,h)$. 
    Hence, $\delta^+(G) \geq d(\Pro)+2k\ell^k$ and we can apply Theorem~\ref{thm:outstar_with_leaves_property} to find in $G$ a copy $B$ of $B^+_{k,\ell}$, with the set of leaves $L$ of size $\ell^k$, such that each $w\in L$ satisfies $\Pro$ in $G$.
    
    So, each $w\in L$ is the centre of $S(w)$, a copy of $S^-_{k,h}$ in $G$.
    Now, take greedily from each $S(w)$ a subgraph $S'(w)$, a copy of $S^-_{k,\ell}$, such that all $S'(w)$ are disjoint from each other, and each $S'(w)$ is disjoint from $B$ save for $w$ --- together, $B$ and $\{S'(w):w\in L\}$ form a copy of $T(k,\ell)$ in $G$.
    This is possible since, when choosing $S'(w)$, the union of $B$ and all previously chosen $S'(w')$ contains at most 
    \[
        \card{V(B)} + \card{L} k\ell\leq 2\ell^k+k\ell^{k+1}
    \]
    vertices.
    Since each of them, except $w$, belongs to at most one ray of $S(w)$, there remain at least 
    \[
        h - 2\ell^k - k \ell^{k+1} \geq \ell
    \]
    rays of $S(w)$ that may be used to form $S'(w)$. 
\end{proof}

\begin{proof}[Proof of Theorem~\ref{thm:outstar_with_leaves_property}]
    Let $G=(V,E)$. We will inductively define sets $\Gamma(0), \dots, 
    \Gamma(k)\subseteq V$ that will guide our construction of the subgraph $B$.
    To do so, we define
    \begin{align*}
        \Gamma(0) &= \{ v \in V : \text{$v$ satisfies $\Pro$} \}, \\
        \Gamma(i) &= \{ v \in V : \deg^+_{\Gamma(i-1)}(v) \geq 2\ell^k \}\text{ for $1\leq i \leq k$.}
    \end{align*}
    \noindent
    The significance of the sets $\Gamma(i)$ is given by the following claim.
    
    \begin{claim}\label{claim:Gamma(k)_nonempty_implies_win}
    $\Gamma(k)\neq \emptyset$.
    \end{claim}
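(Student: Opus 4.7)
The plan is to establish, by induction on $i$, the following more general statement: for every $\delta^+$-common property $\Pro'$ and every digraph $H$ with $\delta^+(H) \geq d(\Pro') + 2i\ell^k$, the set $\Gamma^H(i)$ obtained by running the same construction inside $H$ (with the same threshold $2\ell^k$) is nonempty. Applying this with $\Pro' = \Pro$, $H = G$, and $i = k$ will yield the claim. The base case $i = 0$ is immediate from part (i) of Definition~\ref{def:common_property}: since $\delta^+(H) \geq d(\Pro')$, some vertex $v \in V(H)$ satisfies $\Pro'$ in $H$, and by definition $v \in \Gamma^H(0)$.

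For the inductive step I would argue by contradiction: suppose $\Gamma^H(i) = \emptyset$. Then every vertex of $H$ has fewer than $2\ell^k$ out-neighbors in $\Gamma^H(i-1)$. Setting $H' = H[V(H) \setminus \Gamma^H(i-1)]$, each vertex of $H'$ loses at most $2\ell^k - 1$ out-edges when passing from $H$ to $H'$, so
\[
\delta^+(H') \;\geq\; \delta^+(H) - (2\ell^k - 1) \;\geq\; d(\Pro') + 2(i-1)\ell^k.
\]
The inductive hypothesis applied to $H'$ then produces a vertex in $\Gamma^{H'}(i-1)$.

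The key auxiliary fact, which I would verify by a quick secondary induction on $j$, is the containment $\Gamma^{H'}(j) \subseteq \Gamma^H(j)$ for all $0 \leq j \leq k$. The base case $j = 0$ is exactly the anti-monotonicity clause (part (ii) of Definition~\ref{def:common_property}): a vertex satisfying $\Pro'$ in the subgraph $H'$ also satisfies it in $H$. For $j \geq 1$, any $v \in \Gamma^{H'}(j)$ has at least $2\ell^k$ out-neighbors in $\Gamma^{H'}(j-1) \subseteq \Gamma^H(j-1)$, and these edges persist in the supergraph $H$, giving $v \in \Gamma^H(j)$. Taking $j = i-1$ yields $\emptyset \neq \Gamma^{H'}(i-1) \subseteq \Gamma^H(i-1) \cap V(H') = \emptyset$, a contradiction. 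The main (and only modest) obstacle is cleanly bookkeeping the two nested inductions and making sure anti-monotonicity of $\Pro'$ is invoked in the right direction.
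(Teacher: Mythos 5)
Your proof is correct, and it takes a genuinely different route from the paper's. You prove a stronger, quantified statement by induction on $i$: any digraph of minimum out-degree at least $d(\Pro) + 2i\ell^k$ has $\Gamma(i) \neq \emptyset$. The inductive step assumes $\Gamma^H(i) = \emptyset$, so every vertex has fewer than $2\ell^k$ out-neighbours in $\Gamma^H(i-1)$; deleting $\Gamma^H(i-1)$ therefore costs each remaining vertex at most $2\ell^k - 1$ out-edges, the inductive hypothesis applies to the induced subgraph $H'$, and the containment $\Gamma^{H'}(j) \subseteq \Gamma^H(j)$ (whose base case $j=0$ is precisely the anti-monotonicity clause of Definition~\ref{def:common_property}, and whose step uses that edges of the induced subgraph persist in $H$) lands the nonempty set $\Gamma^{H'}(i-1)$ inside $\Gamma^H(i-1) \cap (V(H)\setminus\Gamma^H(i-1)) = \emptyset$, a contradiction. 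The paper instead argues once, globally in $G$: it records for each vertex the $0/1$ vector of memberships in $\Gamma(0),\dots,\Gamma(k)$, takes the lexicographically smallest vector $z$ with nonempty class $V_z$, and shows $V$ is covered by $V_{\vec{0}}$ together with the union $W$ of those $\Gamma(i)$ with $z_{i+1}=0$; a vertex of $V_z$ then has fewer than $2k\ell^k$ out-neighbours in $W$, which either contradicts $\delta^+(G) \geq 2k\ell^k$ directly or forces $\delta^+(G[X]) \geq d(\Pro)$ and hence a vertex of $\Gamma(0)$ inside $X = V_{\vec{0}}$. Both arguments rest on the same degree-deficit observation ($v \notin \Gamma(i+1)$ implies $\deg^+_{\Gamma(i)}(v) < 2\ell^k$); yours localises it one level at a time and is arguably more modular (it isolates anti-monotonicity cleanly in the base case of the auxiliary containment), at the cost of two nested inductions and re-running the construction in induced subgraphs, whereas the paper's lexicographic-minimality trick dispatches all levels with a single degree count. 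Your bound $d(\Pro) + 2k\ell^k$ matches the paper's hypothesis exactly, so nothing is lost quantitatively.
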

    Assuming Claim~\ref{claim:Gamma(k)_nonempty_implies_win}, we conclude the proof of Theorem~\ref{thm:outstar_with_leaves_property} as follows. Take some vertex $v\in \Gamma(k)$.
    We construct $B$ greedily from $v$ as the root.       
    For $i=0,\dots,k$ we will inductively construct $B_i$, a copy of $B^+_{i,\ell}$ in $G$, such that the leaves of $B_{i}$ (resp.~the single vertex of $B_0$) belong to $\Gamma(k-i)$.
    In particular, the leaves of $B_k$, a copy of $B^+_{k,\ell}$ in $G$, will satisfy $\Pro$ as they will belong to $\Gamma(0)$, so we can take $B=B_k$.
    
    To construct the trees $B_i$, set $B_0 = \{v\}$ and define $B_1$ to be an out-star with $v$ as the centre and $\ell$ of its out-neighbours in $\Gamma(k-1)$, which exist since $\deg^+_{\Gamma(k-1)}(v) \geq 2\ell^k > \ell$, as leaves.
    Now suppose we have constructed the tree $B_i$ for some $1\leq i \leq k-1$, and let  $L \subseteq \Gamma(k-i)$ be the set of its leaves.
    We have 
    \[
        \card{V(B_i)} = \sum_{j=0}^{i} \ell^j\leq 2\ell^{k-1},
    \]
    while $\deg^+_{\Gamma(k-i-1)}(w) \geq 2\ell^k$ for each $w\in L$. Hence, every $w\in L$ has at least $\ell^k$ neighbours in  $\Gamma(k-i-1)\setminus V(B_i)$. Since $\card{L} \leq \ell^{k-1}$, we can greedily choose for each $w\in L$ a set of $\ell$ out-neighbours in $\Gamma(k-i-1)\setminus V(B_i)$, such that the resulting sets are pairwise disjoint. These sets together with $B_i$ form a copy of $B_{i+1,\ell}^+$ in $G$, whose leaves belong to $\Gamma(k-i-1)$. We take this tree to be $B_{i+1}$.
\end{proof}
  
It remains to prove Claim~\ref{claim:Gamma(k)_nonempty_implies_win}. To this end, we partition the vertices of $G$ according to their presence in the sets $\Gamma(i)$.
That is, for a vertex $v\in V$ and an integer $0\leq i\leq k$, let $z_i(v)=1$ if $v\in \Gamma(i)$, and $z_i(v)=0$ otherwise, and let  $z(v) =(z_0(v),\dots,z_k(v))$. Conversely, for a vector $z =(z_0,\dots,z_k)\in \{0,1\}^{k+1}$ we set
\[
    V_z = \{ v \in V : z=z(v) \}
    .
\]
In this notation, Claim~\ref{claim:Gamma(k)_nonempty_implies_win} states that $V_z\neq \emptyset$ for some $z=(z_0,\dots,z_k)$ with $z_k = 1$.

Denote by $\prec$ the lexicographic ordering on $\{0,1\}^{\{0,\dots,k\}}$.
That is, for two vectors $z=(z_0\dots,z_k)$ and $z'=(z'_0,\dots,z'_k)$, we write $z \prec z'$ if there is an index $0\leq i \leq k$ such that $z_i = 0, z'_i = 1$ and $z_j = z'_j$ for all $j > i$.
We highlight the following property of the ordering.
\begin{observation}\label{obs:lex_order_property}
    If $z' \succ z$ and $z'_k = 0$, then there is an index $ i \leq k-1$ such that $z'_i = 1$ and $z_{i+1} = 0$.
\end{observation}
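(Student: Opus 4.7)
The plan is to pick the right index $i$ and then simply unwind the definition of $\prec$. The natural first guess $i = i^*$, where $i^*$ is the pivot index appearing in the definition of $z \prec z'$, is doomed: that definition only constrains coordinates \emph{strictly above} $i^*$ and says nothing about $z_{i^*+1}$; in particular $z'_{i^*+1}$ (and therefore $z_{i^*+1}$) may well equal $1$, so this naive choice of $i$ fails.

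Instead, I would take $i$ to be the largest index in $\{0,\dots,k\}$ for which $z'_i = 1$. Such an index exists: since $z' \succ z$ implies $z' \neq z$, the defining property of $\prec$ forces at least one coordinate of $z'$ to equal $1$. From the hypothesis $z'_k = 0$ we immediately read off $i \leq k-1$, which gives the first required property for free.

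It then remains to verify $z_{i+1} = 0$. Let $i^*$ be the witness appearing in the definition of $z' \succ z$, so that $z'_{i^*} = 1$ and $z_j = z'_j$ for all $j > i^*$. Since $i$ was chosen maximal with $z'_i = 1$, we have $i \geq i^*$, hence $i+1 > i^*$, so $z_{i+1} = z'_{i+1}$. By maximality of $i$, the right-hand side equals $0$, and we are done.

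The only real obstacle is the choice of $i$; once one commits to the topmost surviving $1$ of $z'$ rather than to the pivot index $i^*$, the verification reduces to a two-line chase of definitions, and no combinatorial machinery is required.
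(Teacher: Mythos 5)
Your proof is correct and complete. The paper states this as an unproved Observation, and your argument---taking $i$ to be the largest index with $z'_i=1$, noting $i\geq i^*$ for the pivot $i^*$ of $z\prec z'$, and then reading off $z_{i+1}=z'_{i+1}=0$ from the agreement of coordinates above $i^*$ together with the maximality of $i$---is exactly the intended verification, including the correct observation that the pivot index itself need not work.
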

\noindent
We denote by $\vec{0}$ the all-zero vector $\vec{0} \in \{0,1\}^{k+1}$, and put $X=V_{\vec{0}}=V\setminus\bigcup_{i=0}^k\Gamma(i)$.

\begin{proof}[Proof of Claim~\ref{claim:Gamma(k)_nonempty_implies_win}]
    Suppose for a contradiction that $\Gamma_k=\emptyset$, or equivalently, 
    $V_{z'}=\emptyset$ for all $z'=(z'_0,\dots,z'_{k-1},1)$. Let $z =(z_0,\dots,z_k)$ be the $\prec$-smallest vector such that $V_z \not= \emptyset$; by the above assumption we have $z_k=0$. Let 
    \[
        I=\Big\{i \in \{0,\dots, k-1\}: z_{i+1}=0\Big\} \ \ \text{and} \ \
        W=\bigcup_{i\in I}\Gamma(i).
    \]
    We claim that
    \begin{align*}
        V =
        \begin{cases}
            W        &\text{ if } z \not= \vec{0},\\
            W \cup X &\text{ if } z = \vec{0}.
        \end{cases}
    \end{align*}
    Indeed, $\{V_{z'}:z'\in \{0,1\}^{k+1}\}$ is a partition of $V$, and for a vector $z'=(z'_0,\dots,z'_k)$, we have the following options:
    \begin{enumerate}[label=(\roman*)]
        \item if $z'_k = 1$, then $V_{z'} = \emptyset$ by assumption,
        \item if $z' \prec z$, we have $V_{z'} = \emptyset$ by the minimality of $z$,
        \item if $z' \succ z$ and $z'_k = 0$, then, by Observation~\ref{obs:lex_order_property}, for some $0\leq i \leq k-1$ we have $z'_i = 1$ and $z_{i+1} = 0$ . Therefore, $i\in I$ and $V_{z'} \subseteq \Gamma(i)\subseteq W$,
        \item lastly, if $z' = z \not= \vec{0}$, then, as $z_k=0$, for some $0\leq i \leq k-1$ we have $z_i = 1$ and $z_{i+1} = 0$. Therefore, $i\in I$ and $V_{z'} \subseteq \Gamma(i)\subseteq W$.
    \end{enumerate}
\noindent
    Let now $v\in V_z$ be an arbitrary vertex (recall that $V_z\neq \emptyset$ by definition of the vector $z$).
    For each $i\in I$ we have $v\notin \Gamma(i+1)$, as $z_{i+1}=0$. Hence,
    $\deg^+_{\Gamma(i)}(v)<2\ell^k$. Taking the sum over all $i\in I$ gives 
    \[
        \deg^+_W(v) < 2k\ell^k.
    \]
    So, if $z \not= \vec{0}$, then due to $V=W$, we get 
    \[
        2k\ell^k > \deg^+_W(v) = \deg^+(v) \geq \delta^+(G) \geq 2k\ell^k,
    \]
    a contradiction. 
    
    \noindent
    While if $z = \vec{0}$, we obtain 
    \[
        d_X^+(v) \geq \delta^+(G) - \deg^+_W(v) \geq \delta^+(G) - 2k\ell^k \geq d(\Pro).
    \]
    Since in this case $v$ was an arbitrary vertex of $V_z=V_{\vec{0}}=X$, it follows that the induced subgraph $G[X]$ satisfies $\delta^+(G[X]) \geq d(\Pro)$. Thus, there is a vertex $x \in X$ that satisfies the property $\Pro$ in $G[X]$.
    However, by the antimonotonicity of $\Pro$, $x$ must also satisfy $\Pro$ in $G$, meaning $x\in \Gamma(0)$. This is a contradiction with the fact that
    \[
        x \in X =V_{\vec{0}}\subseteq V\setminus \Gamma(0).
    \]
    Hence, our initial assumption that $\Gamma(k)=0$ was contradictory, and we must have $\Gamma(k)\neq 0$. 
\end{proof}
\section{Giant spiders exist}\label{sec:spiders}

In this section we prove Theorem~\ref{thm:linear_spiders}, which gives a linear bound on the minimum out-degree that guarantees a copy of $\smtwol$.

\begin{proof}[Proof of Theorem~\ref{thm:linear_spiders}]
    Let $\ell\geq 1$ be fixed and $G = (V, E)$ be a digraph with $\delta^+(G) \geq d$, where $d \ge (1+\sqrt{5})\ell$. By removing edges if necessary, we may assume that $\deg^+(v) = d$ for every vertex $v \in V$. We remark that this seemingly insignificant assumption is crucial for our argument. Our goal is to find a copy of $\smtwol$ in $G$. To this end, we partition $V$ into subsets $A$ and $B$, where 
    $$A = \{v \in V : \deg^-(v) \geq 2\ell\} \ \ \text{ and } \ \ B =\{v \in V : \deg^-(v) < 2\ell\},$$
    and note that $A\neq \emptyset$, since the average in-degree in $G$ is $d>2\ell$.

    If there exists a vertex $r \in A$ such that $\deg^-_A(r) \geq \ell$, we exhibit a copy of $\smtwol$ in $G$ as follows:
    First, we arbitrarily choose distinct vertices $a_1, a_2,\dots, a_\ell$ where $a_i\in N^-_A(r)$ and $1 \le i \le \ell$.
    Then we greedily select vertices $x_1, x_2,\dots, x_\ell \in V$ such that $x_i\in N^-_{V}(a_i)\setminus \{r, a_1,\dots,a_{i-1},a_{i+1}, \dots, a_\ell, x_1, \dots, x_{i-1}\}$.
    We will not run out of vertices since the vertex $a_i$ has at least $2\ell$ in-neighbours and, at the time of selecting the vertex $x_i$, at most $1 + (\ell-1) + (i-1) < 2\ell$ of them are already used in our construction.

    Thus, for the rest of the proof we may assume that $\deg^-_A(v) \leq \ell-1$ for every $v \in A$. For three sets $X,Y,Z \subseteq V$, not necessarily distinct, we denote by $X \rightarrow Y \rightarrow Z$ the set of all $2$-edge directed paths, with the vertex set $\{x,y,z\}$ for some $x \in X$, $y\in Y$, and $z \in Z$ and the edge set $\{(x,y),(y,z)\}$.

    \begin{figure}[htbp]
        \centering
        \includegraphics[scale=0.85]{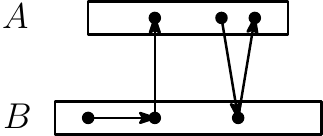}
        \caption{An example of paths in $V \rightarrow B \rightarrow A$, one starting in $B$ and the other in $A$.}
    \end{figure}
    
    Observe that
    \begin{align*}
        \card{V \rightarrow B \rightarrow A} &=  \card{V \rightarrow B \rightarrow V} - \card{V \rightarrow B \rightarrow B}\\
        &= \card{A \rightarrow B \rightarrow V} + \card{B \rightarrow B \rightarrow V} - \card{V \rightarrow B \rightarrow B}\\
        &= \card{A \rightarrow V \rightarrow V} - \card{A \rightarrow A \rightarrow V} + \card{B \rightarrow B \rightarrow V} - \card{V \rightarrow B \rightarrow B}, 
    \end{align*}
    and let us estimate the terms in the last expression.
    Because $\deg^+(v)=d$ for all $v$, $\deg^-_A(v) \leq \ell-1$ for every $v \in A$, and $\deg^-(v) \leq 2\ell-1$ for every $v \in B$, we obtain:
    \begin{align*}
        \card{A \rightarrow V \rightarrow V} &\geq \card{A} d(d-1), \\
        \card{A \rightarrow A \rightarrow V} &\leq e(G[A]) d \leq (\ell-1) d\card{A}, \\
        \card{B \rightarrow B \rightarrow V} &\geq e(G[B])(d-1), \\
        \card{V \rightarrow B \rightarrow B} &\leq e(G[B])(2\ell-1).
    \end{align*}
    Hence
    \begin{align*}
        \card{V \rightarrow B \rightarrow A} &\geq d(d-1)\card{A} - (\ell-1) d\card{A} + e(G[B])(d-1) - e(G[B])(2\ell-1) \\
            &\geq d(d - \ell )\card{A} + e(G[B])(d - 2\ell)\\
            &\geq d(d - \ell)\card{A}.
    \end{align*}
    By the pigeonhole principle, there exists a vertex $a \in A$ such that 
    \[
        \card{V \rightarrow B \rightarrow \{a\}} \geq d(d-\ell).
    \]

    Let $s$ be maximal such that there exists $S=\{b_1,\dots,b_s\}\subseteq N_B^-(a)$ and $Q=\{q_1,\dots,q_s\}\subseteq V\setminus (S\cup \{a\})$ with $(q_i,b_i)\in E$ for all $1\leq i\leq s$. Note that this gives a copy of $S^-_{2,s}$ in $G$. Since the in-degrees in $B$ are at most $2\ell$, we have 
    \[
        \card{V \rightarrow S \rightarrow \{a\}} \leq 2 \ell s,
    \]
    and similarly
    \[
        \card{V \rightarrow Q \cap B \rightarrow \{a\}} \leq 2 \ell s.
    \]
    Let $R \coloneqq B\setminus (S \cup Q)$. Consequently, we obtain
    \[
        \card{V \rightarrow R \rightarrow \{a\}} \geq  d(d-\ell) - 4\ell s.
    \]

    By maximality of $s$, all these paths have their first vertex in $S \cup Q$. 

    Now, fix a pair $(q_i,b_i)$. Set $X \coloneqq N_B^-(a) \cap N^+(q_i) \cap R$ and $Y \coloneqq N_B^-(a) \cap N^+(b_i) \cap R$. If there exist some distinct vertices $x \in X,y \in Y$, then we can extend $S $ and $Q$ by one element each (by adding $x,y$ to $S$ and moving $b_i$ to $Q$), contradicting the maximality of $s$. 
    
    Hence, no such pair of distinct vertices exists. Consequently, either $X=Y$ with $\card{X} = 1$, or $\card{X} = 0$ (or symmetrically $\card{Y}$ = 0). In the former case, there are at most two paths of the form $\{q_i,b_i\} \rightarrow R \rightarrow \{a\}$. In the latter case, we have $\card{N_B^-(a) \cap (N^+(q_i) \setminus \{b_i\})} \le d$ (or $\card{N_B^-(a) \cap N^+(b_i)} \le d$, respectively), since, by assumption, $\deg^+(v) = d$ for every vertex $v \in V$.
    Thus, we have at most $d$ paths of the form $\{q_i,b_i\} \rightarrow R \rightarrow \{a\}$.

    Therefore, there are at most $sd$ such paths starting at some $z \in S \cup Q$ and ending at $a$, which implies,
    \[
        sd \geq \card{S \cup Q\rightarrow R \rightarrow \{a\}} \geq {d(d-\ell) - 4\ell s},
    \]
    resulting in 
    \[
        s\geq \frac{d(d-\ell)}{d+4\ell}.
    \]
    The last expression is at least $\ell$ for $d \geq (1+\sqrt{5})\ell \approx 3.23 \cdot\ell $, since $(1+\sqrt{5})\ell$ is the positive root of the underlying quadratic equation. Thus, $G$ contains a copy of $\smtwol$.
\end{proof}
\section{Acknowledgement}

We would like to thank the organizers of the KAMAK 2024 workshop for the outstanding hospitality that facilitated the beginning of our work on this paper. The authors would also like to thank Grzegorz Gutowski for drawing attention to a mistake in an earlier draft and for fruitful discussions.

\printbibliography

@article{aboulker1610subdivisions,
  title = {Subdivisions in Digraphs of Large Out-Degree or Large Dichromatic Number},
  volume = {26},
  number = {3},
  journal = {The Electronic Journal of Combinatorics},
  publisher = {The Electronic Journal of Combinatorics},
  author = {Aboulker,  Pierre and Cohen,  Nathann and Havet,  Frédéric and Lochet,  William and Moura,  Phablo F. S. and Thomassé,  Stéphan},
  year = {2019},
  month = jul,
  url = {https://doi.org/10.37236/6521}
}

@article{gishboliner2022oriented,
  title={Oriented cycles in digraphs of large outdegree},
  author={Gishboliner, Lior and Steiner, Raphael and Szab{\'o}, Tibor},
  journal={Combinatorica},
  volume={42},
  number={1},
  pages={1145--1187},
  year={2022},
  publisher={Springer},
  url={https://doi.org/10.1007/s00493-021-4750-z}
}

@article{alon1996disjoint,
  title={Disjoint directed cycles},
  author={Alon, Noga},
  journal={Journal of Combinatorial Theory, Series B},
  volume={68},
  number={2},
  pages={167--178},
  year={1996},
  publisher={Elsevier},
  url={https://doi.org/10.1006/jctb.1996.0062}
}

@article{bucic2018improved,
  title={An improved bound for disjoint directed cycles},
  author={Buci{\'c}, Matija},
  journal={Discrete Mathematics},
  volume={341},
  number={8},
  pages={2231--2236},
  year={2018},
  publisher={Elsevier},
  url={https://doi.org/10.1016/j.disc.2018.04.027}
}

@article{mader1995exixtence,
  title={Existence of vertices of local connectivity $k$ in digraphs of large outdegree},
  author={Mader, Wolfgang},
  journal={Combinatorica},
  volume={15},
  pages={533--539},
  year={1995},
  publisher={Springer},
  url={https://doi.org/10.1007/BF01192525}
}

@article{cheng2024length,
  title={On the length of directed paths in digraphs},
  author={Cheng, Yangyang and Keevash, Peter},
  journal={SIAM Journal on Discrete Mathematics},
  volume={38},
  number={4},
  pages={3134--3139},
  year={2024},
  publisher={SIAM},
  url={https://doi.org/10.1137/24M1648375}
}

@book{caccetta1978minimal,
  title={On minimal digraphs with given girth},
  author={Caccetta, Louis and H\"aggkvist, Roland},
  year={1978},
  publisher={Department of Combinatorics and Optimization, University of Waterloo}
}

@book{bang2008digraphs,
  title={Digraphs: theory, algorithms and applications},
  author={Bang-Jensen, J{\o}rgen and Gutin, Gregory Z.},
  year={2008},
  publisher={Springer Science \& Business Media},
  url={https://doi.org/10.1007/978-1-84800-998-1}
}

@article{sullivan2006summary,
  title={A summary of problems and results related to the Caccetta-Haggkvist conjecture},
  author={Sullivan, Blair Dowling},
  journal={arXiv preprint math/0605646},
  year={2006},
  url={https://arxiv.org/abs/math/0605646}
}

@article{christoph2023note,
  title={A note on digraph splitting},
  author={Christoph, Micha and Petrova, Kalina and Steiner, Raphael},
  journal={Combinatorics, Probability and Computing},
  year={2025},
  pages={1–6},
  url={https://doi.org/10.1017/S0963548325000045}
}

@article{bermond1981cycles,
  title={Cycles in digraphs--a survey},
  author={Bermond, Jean-Claude and Thomassen, Carsten},
  journal={Journal of Graph Theory},
  volume={5},
  number={1},
  pages={1--43},
  year={1981},
  publisher={Wiley Online Library},
  url={https://doi.org/10.1002/jgt.3190050102}
}

@article{stiebitz1996decomposing,
  title={Decomposing graphs under degree constraints},
  author={Stiebitz, Michael},
  journal={Journal of Graph Theory},
  volume={23},
  number={3},
  pages={321--324},
  year={1996},
  publisher={Wiley Online Library},
  url={https://doi.org/10.1002/(SICI)1097-0118(199611)23:3%3C321::AID-JGT12%3E3.0.CO;2-H}
}

@article{alon2006splitting,
  title={Splitting digraphs},
  author={Alon, Noga},
  journal={Combinatorics, Probability and Computing},
  volume={15},
  number={6},
  pages={933--937},
  year={2006},
  publisher={Cambridge University Press},
  url={https://doi.org/10.1017/S0963548306008042}
}

@article{Burr_1982,
  title={Antidirected Subtrees of Directed Graphs},
  volume={25},
  number={1},
  journal={Canadian Mathematical Bulletin},
  author={Burr, Stefan A.},
  year={1982},
  pages={119–120},
  url={https://doi.org/10.4153/CMB-1982-017-4}
}

@article{penev2025twoblockpathsorientedgraphs,
  title={Two-block paths in oriented graphs of large semidegree},
  author={Penev, Irena and Taruni, S and Thomass{\'e}, St{\'e}phan and Trujillo-Negrete, Ana and Tyomkyn, Mykhaylo},
  journal={arXiv preprint arXiv:2503.23191},
  year={2025},
  url={https://arxiv.org/abs/2503.23191}
}

@article{KLIMOSOVA2023113515,
title = {Antipaths in oriented graphs},
journal = {Discrete Mathematics},
volume = {346},
number = {9},
pages = {113515},
year = {2023},
author = {Tereza Klimošová and Maya Stein},
url={https://doi.org/10.1016/j.disc.2023.113515}
}

@article{jackson1981long,
  title={Long paths and cycles in oriented graphs},
  author={Jackson, Bill},
  journal={Journal of Graph Theory},
  volume={5},
  number={2},
  pages={145--157},
  year={1981},
  publisher={Wiley Online Library},
  url={https://doi.org/10.1002/jgt.3190050204}
}

@article{skokan2024alternating,
  title={Alternating paths in oriented graphs with large semidegree},
  author={Skokan, Jozef and Tyomkyn, Mykhaylo},
  journal={arXiv preprint arXiv:2406.03166},
  year={2024},
  url={https://arxiv.org/abs/2406.03166}
}

@article{kathapurkar2022spanning,
  title={Spanning trees in dense directed graphs},
  author={Kathapurkar, Amarja and Montgomery, Richard},
  journal={Journal of Combinatorial Theory, Series B},
  volume={156},
  pages={223--249},
  year={2022},
  publisher={Elsevier},
  url={https://doi.org/10.1016/j.jctb.2022.04.007}
}

@inbook{Stein_2024,
  title = {Oriented Trees and Paths in Digraphs},
  booktitle = {Surveys in Combinatorics 2024},
  publisher = {Cambridge University Press},
  author = {Stein,  Maya},
  year = {2024},
  month = jun,
  pages = {271–295},
  url={https://doi.org/10.1017/9781009490559.010}
}

@article{Mader1996,
  title = {On topological tournaments of order 4 in digraphs of outdegree 3},
  volume = {21},
  number = {4},
  journal = {Journal of Graph Theory},
  publisher = {Wiley},
  author = {Mader,  Wolfgang},
  year = {1996},
  month = apr,
  pages = {371–376},
  url={https://doi.org/10.1002/(SICI)1097-0118(199604)21:4%3C371::AID-JGT2%3E3.0.CO;2-M}
}

\end{document}